\def\1{{\mathchoice {1\mskip-4mu\mathrm l}
{1\mskip-4mu\mathrm l} {1\mskip-4.5mu\mathrm l}
{1\mskip-5mu\mathrm l}}}
\newcommand{\Z}{\mathbb{Z}}
\newcommand{\Ln}{{\Lambda_n}}
\newcommand{\Zd}{{\mathbb{Z}^d}}
\newcommand{\R}{\mathbb{R}}
\renewcommand{\S}{\mathbf{Sol}}
\def \Sp {\ell^2(\gamma)}
\newcommand{\F}{{\cal{F}}}
\newcommand{\G}{{\cal{G}}}
\newcommand{\C}{{\cal{C}}([0,1],\R)}
\newcommand{\I}{{\cal{I}}}
\newcommand{\Is}{{\mathfrak{I}}}
\renewcommand{\P}{{\mathbf{W}}}
\def \Prob {{\cal P}}
\def \o {{ \omega}}
\def \O {{ \Omega}}
\def \L{\Lambda}
\newtheorem{Lemma}{Lemma}[section]
\newtheorem{Definition}{Definition}[section]
\newtheorem{Proposition}{Proposition}[section]
\newtheorem{Theorem}{Theorem}[section]
\newtheorem{itremark}{Remark}[section]
\newenvironment{remark}{\addtocounter{equation}{1}
\begin{itremark}}{\end{itremark}}
\def \disp {\displaystyle}
\def\proof{\noindent{\bfseries Proof. }}
\def\endproof{\mbox{\ \rule{.1in}{.1in}}}
\begin{document}

\title{{\bf Path-dependent infinite-dimensional SDE with non-regular drift: an existence result}}
\author{
David Dereudre
\footnote{Laboratoire Paul Painlev\'e,  UMR CNRS 8524, Universit\'e Lille1, 
                               59655 Villeneuve d'Ascq Cedex, France,
{\em david.dereudre@univ-lille1.fr}}\\
Sylvie R\oe lly
\footnote{Institut f\"ur Mathematik der Universit\"at
Potsdam, Am Neuen Palais 10, 14469 Potsdam, Germany, {\em roelly@math.uni-potsdam.de}}
}
\maketitle

\begin{quote}
Abstract: 
We establish in this paper the existence of weak solutions of infinite-dimensional shift invariant 
stochastic differential equations driven by a Brownian term. The drift function is very general, in the sense that it is 
supposed to be neither  bounded or 
%in the norm $\|\cdot\|_{\infty}$ 
%but not necessarily small
 continuous, nor  Markov. 
On the initial law we only assume that it admits a finite specific entropy and a finite second  moment.   \\  
The originality of our method leads in the 
use of the specific entropy as a tightness tool and in the description of such  infinite-dimensional stochastic process 
as solution of a variational problem on the path space.
Our result clearly improves previous ones obtained for free dynamics with bounded drift.
%via Girsanov theorem in Hilbert spaces. 
\end{quote}
\normalsize
{\bf Key-words}: Infinite-dimensional SDE, non-Markov drift, non-regular drift, variational principle, specific entropy.
%free energy.
\vspace{12pt}
\newpage

\section{Introduction}

 %For $\o \in \Omega$ we write $\o = (\o_i(t))_{i \in \Z^d , t \in [0,1]}$. 
The main object of this paper is the 
%translation-invariant 
infinite-dimensional stochastic differential equation (SDE) 
\begin{equation}\label{eq:1}
 dX_i(t) = {\bf b}_t(\theta_{i}  X) \, dt + dB_i(t)
\ , \ \ i \in \Z^d, 
%\ t \in [0,1]
\end{equation}
on the configuration space $\Omega = {\cal{C}}([0,T],\R)^{\Z^d}$,
 where 
 the drift ${\bf b}:[0,T]\times \O$ is an adapted functional,
 $\theta_{i}$ denotes the space-shift on $\O$ by vector $-i$ and
 $(B_{i})_{i \in \Z^{d}}$ is a sequence of independent real-valued Brownian motions.

Our aim is to prove the existence of a  
space-shift invariant 
weak solution of the SDE \eqref{eq:1} on the finite time-interval $[0,T]$, 
where the  drift ${\bf b}$ is supposed to be 
as general as possible, in particular {\it non-Markov}, {\it non-regular} 
 and {\it non bounded}.
Indeed, in Theorem \ref{mainTH}, we solve  the SDE  \eqref{eq:1}
for a  path-dependent drift 
which is local and admits a sublinear growth
(see the precise assumptions
in Section \ref{sec:results}). \\

%The initial condition is assumed to be shift-invariant with finite specific entropy and finite second order moment.

Let us illustrate our main result by a simple example.
%, where the relative positions of the coordinates play an important role.
Let $\beta^+\not =\beta^-$ be two functions defined on $\R^{\Delta}$  where  $\Delta$ is  a finite subset 
 of $\Z^d $.
 Define first
the function $b$ on $\R^{\Z^d}$ by
\begin{equation} \label{eq:ex1}
 { b}(x):= \beta^+(x_\Delta) \, \1_{\big\{x_0 \ge \frac{1}{N}\sum_{i\in\Delta} 
 x_i\big\}}+\beta^-(x_\Delta) \, \1_{\big\{x_0 <
\frac{1}{N} \sum_{i\in\Delta} x_i \big\}},
\end{equation}
where $x_\Delta:=(x_i)_{i \in \Delta}$ and $N$ is the cardinality of $\Delta$. 
It coincides with the function $\beta^+$ (respectively $\beta^-$) if the $0$-coordinate $x_0$ is 
larger (respectively smaller) 
than the barycentre of $x_\Delta$. 
Introducing a  $\delta$-delay (with $0<\delta<T$) consider now the 
 drift  ${\bf
b}_t (\o) :=b (\o(0\vee (t-\delta))$. It leads to 
a stochastic differential delay equation \eqref{eq:1} whose local discontinuous drift 
is uniformly sublinear  (see (\ref{eq:driftsslineaire})) as soon as the functions 
 $\beta^+$ and $\beta^-$ admit a  sublinear growth.
 In the above example the time memory 
of the drift is bounded by $\delta$, but our approach  also allows to
deal with path-dependent drift with long-term memory like 
${\bf b}_t(\o) :=\int_0^t{ b}(s,\o(s)) ds$. 
%Moreover an sublinear part - see definition \eqref{eq:driftsslineaire} -  may be added.\\

Note that SDE with non-Markov and non-regular drifts are relevant in many fields of 
applications like mathematical finance 
 (see e.g. the stochastic functional differential equation with
fixed or variable delay (2) and (3) satisfied by  the stock price in \cite{AHMP07}),
biomathematics  or
physics, see e.g.
\cite{Mao97}, or \cite{TP01}.\\

Let us briefly recall some results concerning  weak solutions of infinite-dimensional SDEs
with additive noise. \\
In the very special Markovian case, when the drift only depends on the present time 
${\bf b}_t(\o) = {\bf b}_t(\o(t)),$  and the
 functions  $x \mapsto{\bf b}_t(x)$ are regular and satisfy certain growth condition 
at infinity, weak solutions of \eqref{eq:1} with values in a weighted $\ell^2$-space  
were constructed in \cite{lr} adapting a method used in  \cite{dr}. 
% the particular case of gradient drift (i.e. the function $b$
% is the gradient of a smooth Hamilton function) was treated earlier in detail in \cite{dr} 
% (see also the clear review presented in the course \cite{r}).
For the existence of weak solutions of a Markov SDE with unbounded linear 
term the general theory of Dirichlet forms can also be used fruitfully, see e.g. \cite{AR91}.
% Very recently, for SDEs with values in Hilbert spaces with non-regular Markovian drift, 
% strong uniqueness results were obtained in several frameworks, see \cite{dpfpr} and \cite{dpfrv}. 
\\  
If the  drift is non-Markov but uniformly bounded, also up to the addition of an unbounded regular linear operator,
imbedding the system of SDEs in an appropriate Hilbert space, it is possible to use 
 Girsanov theory to provide a weak solution as element of this Hilbert space 
 (see e.g. Chapter 10 in \cite{dpz}). \\
 
% satisfies a Lipschitz assumption (see Section \ref{sec:results}
% for precise definitions), 
% extending straightforwardly the results in \cite{ss} would provide the
% existence and uniqueness of a strong solution of \eqref{eq:1}.
% %if the initial distribution satisfies some integrability condition. 
\noindent For general non-Markov, non-regular and non bounded drifts ${\bf b}$, to our knowledge, 
till now there is no general weak existence result.
The aim of this paper is to present a new entropy method to solve that question.
Indeed, a  fruitful approach to construct weak solutions of infinite systems of SDEs like 
\eqref{eq:1} is to describe them as 
Gibbs measures on a path space. 
This point of view  was initiated for gradient diffusions on a finite time interval in
\cite{d2} and developed later in \cite{crz}. 
%Infinite-dimensional diffusions on an infinite time interval 
%were also described as space-time Gibbs states in \cite{mrz}.
The procedure includes two steps: \\
{\it i}) the construction of Gibbs measures on the path space associated to a suitable 
Hamiltonian $H$ depending on the drift ${\bf b}$
and on the initial Gibbsian law. \\
{\it ii}) the  identification of (some of) them as weak solutions of \eqref{eq:1}.
\vspace{2mm}

\noindent When the uniform norm of the drift ${\bf b}$ is small enough, step {\it i}) can be done 
via the perturbative techniques of cluster expansion, as in 
\cite{DaiRoe06} and \cite{RR14}. 
But recently a more general tool, first appeared in  \cite{GH} and based on the compactness of the  level sets of the
specific entropy density, 
allowed to construct directly infinite-volume Gibbs measures associated 
to strong interaction \cite{D,DDG}. 
This {\it entropy method} will be our first major tool. 
It will allow us to treat the case of a drift with sublinear growth, 
with an initial law which is not necessarily Gibbsian.
Moreover the constructed solution inherits for free the finite entropy property of its approximations.
\vspace{2mm}

\noindent
When the drift is Markov and Malliavin-differentiable, step {\it ii}) can be 
done via an integration by parts formula on the path space, as in \cite{crz}. In
the general case, a variational principle 
which characterizes the shift invariant Gibbs measures as the
minimizers of a
so-called free energy functional,
%(specific entropy + mean energy)
 is more suitable.
So, we will here identify the Gibbs measure solving a {\it variational
problem}, as in \cite{dprz}: it will be  our second major tool. \\

\vspace{2mm}

Our approach underlines to what extent tools 
from statistical mechanics can be powerful in the framework of stochastic analysis.
%So, our strategy is to build Gibbs measures on path space for a suitable Hamiltonian $H$ via the compactness of level sets of the specific entropy. Then,
%thanks to the variational principle we identify these Gibbs measures as weak solutions of the SDE \eqref{eq:1}. 
Let us mention that this strategy has just been
applied fruitfully in the framework of stochastic geometry 
to construct Gibbsian dynamics of tessellations by means of random cell divisions in continuous time,
 see \cite{GST}. There, the tightness tool is based on the level set of a space-time entropy density.

We aim to apply our approach in a next paper to more sophisticated infinite dimensional SDEs
 called {\it continuous systems},
to distinguish them from the lattice case (see the pioneer work from R. Lang \cite{la}).
There, the drift of the $i$th-coordinate depends on 
any other coordinate $X_j$ according to the values taken by $X_j$ and not only to the relative position of the index  $j$
with respect to $i$. \\

%% Such approach was already developed in  \cite{dprz}, i
%% n the framework of space-time Gibbs measures. \\

The paper is divided into the following sections.
Section 2 contains the framework and the main results.
In section 3, the proof of the main theorem is given, consisting in the construction of a weak solution of \eqref{eq:1}
as minimizer of a free energy functional.
In section \ref{sec:prop}, we will point out some interesting structural properties satisfied by this solution.

%We would like to point out at this place why we can not use  the bijection  - obtained in Th\'eor\`eme 5.1 in \cite{crz} - 
%between the set of solutions of infinite dimensional SDE and the set of Gibbs measures on $\O$. 
%Indeed, unlike \cite{crz}, we do not assume here any Gibbs
%structure for the
%initial law $\mu$. Therefore $\bar P$ can not be Gibbsian (in a strict sense) on the path space $\O$.\\
%Moreover, in \cite{crz}, we used Malliavin calculus on the path space to differentiate the regular Hamiltonian functional 
%constructed via the drift of the canonical process. Here the functional $H$ introduced in (\ref{FVappoximation})
%has no regularity a priori since the drift is very general.
%\vspace{10pt}
%
%Therefore we use variational tools to analyse the structure of the limit point $\bar P $.

\section{Framework and main result}

\subsection{State spaces}

From now on, without loss of generality, we fix $T=1$, i.e. the time interval is equal to $[0,1]$. 
So the configuration space of the SDE \eqref{eq:1} is the
canonical space $\Omega = \C^{\Zd}$ endowed with the canonical Borel $\sigma$-field  ${\cal{F}}$ 
  generated by the cylinders. 
The canonical process on $\O$ is denoted  by $X=(X_i(t))_{i \in \Zd, t \in [0,1]}$. 
It generates the canonical filtration which we will use in the sequel.

For any $\omega\in \Omega, i \in \Zd$ and any $t\in [0,1]$, we denote by $\omega^*(t)$
the element in $\R^\Zd$ defined by 
\begin{equation} \label{eq:omegaEtoile}
 \o_i^*(t)=: \sup_{0\leq s\leq t}|\o_i(s)|. 
\end{equation}
%
%For simplicity, $\o^*$ stands for $\o^*(1)$.
\\
For any $i\in\Zd$, we denote by $\theta_i$ the space shift by vector $-i$ which acts on $\R^\Zd$ or on $\Omega$. 
With $\Prob (E)$ we denote the space of probability measures
 on any measurable space $(E,\cal{E})$.  Moreover,
 $$
 \Prob_s(\O):= \{ P \in \Prob(\O), \, P \circ \theta_i^{-1}= P \quad \forall i \in \Zd \}
 $$
 is the set of  probability measures on $\O$ which are space-shift invariant. \\
 Similarly,
 $$
 \Prob_s(\R^\Zd):= \{ P \in \Prob(\R^\Zd), \, P \circ \theta_i^{-1}= P \quad \forall i \in \Zd \}.
 $$

In a natural way, we take as reference measure on $\O$ the law $\P$ of the 
 non-interacting infinite system corresponding to ${\bf b}=0$ with a product measure as initial law, i.e.
 \[
 \P =  \Big( \int_\R W^z \, m(dz) \Big)^{\otimes {\Z^d}}\, \in \Prob_s(\O).
 \]
Here $W^z$ denotes  the Wiener measure on $ \C $ with fixed initial condition $z$ and $m \in \Prob(\R) $ is a given 
probability measure on $\R$.\\

For any subset $\L \subset \Zd$ we denote by $X_\L=(X_i)_{i \in \L}$ the projection from $\O$ on $\C^\L$. 
We also define the 
%local 
$\sigma$-field
\begin{equation}\label{FLambda}
\F_\L = \sigma (X_\L(t),  t\in[0,1]) ,
\end{equation}
and the projection by $X_\L$ of a probability measure $P \in \Prob(\O)$:
%$m^\L$ (respectively $\pi^\L$) the product measure of $m^{\otimes \L}$ (respectively $\pi^{\otimes \L}$) on the 
%space $\R^\L$ (respectively $\C^\L$).
 $$
 P_\L:= P\circ X_\L^{-1} \in \Prob (\C^\L).
 $$
 Similarly,  for any $\mu \in \Prob (\R^\Zd)$,  its $\L$-marginal law is denoted by $\mu_\L \in \Prob(\R^\L)$.\\
%For any $t\in[0,1]$ we denote by $p_t$ the projection of $\C$ on $\R$ defined for any path $x\in\C$ by $x(t)\in\R$. 
%The operator $p_t$ can be clearly extended

\subsection{Specific entropy}

%Moreover, for any $P \in \Prob(\O)$, 
%we denote by $P_t$ its time-projection:
%$$P_t:= P\circ X(t)^{-1} \in \Prob (\R^\Zd).$$ 
%$P_0$ denotes its marginal law at initial time, $P\circ X(0)^{-1} \in \Prob (\R^\Zd)$. \\

For $\mu, \nu$ probability measures on a measurable space $(E,
\cal{E})$, we denote by $\I(\mu;\nu)$ their {\it relative entropy} defined as usual by:

\begin{equation*}
 \I(\mu;\nu)=
 \left\{ 
 \begin{array}{ll}
\int_E \ln(f) \, d\mu & \text{if } \mu\ll  \nu \text{ with density } f\\
+\infty & {\text{ otherwise}}
\end{array}
\right..
\end{equation*}
When the underlying space has an infinite  product structure, i.e. $E=:S^{\Zd}$, 
one localises the entropy in the following way: \\
 for any subset $\L \subset \Zd$ and $\mu, \nu \in \Prob (S^\Zd)$,
$ \I_\L(\mu;\nu):= \I(\mu_\L;\nu_\L)$.\\
%$\I_{\L}(\mu;\nu)$ is the relative entropy between the local projections $\mu_{\L}$ and $\nu_{\L}$: 
Now, we recall the definition of the {\it specific entropy}  of a  shift invariant probability measure $\mu$ on $S^\Zd$  
 with respect to a reference measure $\sigma \in \Prob_s (S^\Zd)$:
%$m^{\otimes\Zd}$): 
\begin{equation}\label{Sentropy}
\Is(\mu):=\lim_{\L\nearrow \Zd} \frac{1}{|\L|} \, \I_\L(\mu;\sigma) ,
\end{equation}
where the limit above is taken  for any increasing sequence $(\L_n)_n$ of 
finite sets converging to $\Zd$ and  $|\L|$ denotes the cardinality of $\L$. 
In the following, we will consider either $S=\R$ and $\sigma= m^{\otimes\Zd}$ or
$S=\C $ and $\sigma=\P$. \\

%Similarly, at the path level, the specific entropy of any shift invariant probability 
%measure $Q \in \Prob_s(\O)$ with respect to $\P$ is given by:
%\begin{equation}\label{Sentropy}
%\Is(Q):=\lim_{\L\nearrow \Zd} \frac{1}{|\L|} \, \I_\L(Q;\P) .
%\end{equation}
The concept of {\it specific} entropy appeared first in \cite{rr} and we advice for instance  Chapter 15, \cite{Geo2011} for a general presentation.  
\vspace{12pt}

\subsection{Results} \label{sec:results}

Let us first define the properties satisfied by the drift and by the initial condition.
\begin{itemize}
 \item
A function {\bf f} defined on $\O$ is said ($\Delta$-){\it local} if
 there exists a finite subset $\Delta\subset \Z^d$ such that
\begin{equation}\label{driftlocal}
{\bf f}(\o) = {\bf f}(\o_{\Delta}).
\end{equation}
%where $\o_{\Delta}= (\o_i)_{i \in \Delta}$ denotes the coordinates 
%of the  path  $\o$ indexed by $\Delta$.
% \item
% We say that a drift ${\bf b}:[0,T]\times \O$ is {\it uniformly bounded} if
% \begin{equation}\label{driftborné}
% \Vert {\bf b }\Vert_\infty :=  \sup_{t\in[0,T],\omega\in\Omega} |{\bf b}_t(\omega)|<+\infty. \\
% \end{equation} 
\item
A $\Delta$-local adapted drift ${\bf b}$ is said {\it uniformly 
sublinear} if there exists $ C>0$
such that for all $\omega \in \Omega$ and $t \in [0,1]$, 
\begin{equation} \label{eq:driftsslineaire}
{\bf b}_t(\o)^2 \le C\Big(1 + \sum_{j \in \Delta} \o_j^*(t)^2\Big), 
\end{equation}
where  the paths
$\o_j^*(.)$ were defined by \eqref{eq:omegaEtoile}.

% 
% \begin{eqnarray}\label{driftLipschitz}
%   \forall t\in[0,1], \qquad |{\bf b}_{t}(\o) - {\bf b}_{t}(\o')| &\leq &
%   L \sup_{s\leq t,i \in \Delta} |\o_i(s) - \o_i'(s)| \label{eq:UnifLip}\\
%  \textrm{and } \qquad  | {\bf b}_{t}(0)| &\leq& L .
%   \label{eq:OnePathBnded}
%   \end{eqnarray}

\noindent  
A typical example of such a drift, dealt in \cite{RRR10} Equation (20), is
$
{\bf b}_{t}(\omega)= \int_0^t \alpha(s,\omega_\Delta(s))\, ds ,
$
where $\alpha(s, \cdot)$ is a function from $\R^\Delta$ to $\R$ with sublinear growth.\\

\item 
We denote by 
$$
\Prob_{s,2} (\R^\Zd):= 
\Prob_s (\R^\Zd) \cap \{\mu: \Is(\mu) <+\infty  \textrm{ and } \int x_0^2 \, \mu (dx) < +\infty \}
$$
the set of space-shift invariant probability measure on $\R^\Zd$ 
having a finite specific entropy and  a second moment 
for each coordinate.
\end{itemize}

Our main result is the following theorem.

\begin{Theorem}\label{mainTH}
Fix an initial  probability measure
  $\mu \in \Prob_{s,2} (\R^\Zd)$.
Assume that the drift ${\bf b}$ is local and uniformly
sublinear, i.e. satisfies conditions  \eqref{driftlocal} and \eqref{eq:driftsslineaire}.
Then the infinite-dimensional SDE \eqref{eq:1} admits, 
at least, one shift-invariant  weak  
solution $P$ with initial marginal law $\mu$. Moreover its specific entropy $\Is(P)$ is finite and 
$E_P\big( \sup_{t \in [0,1]}X_i(t)^2\big) < + \infty $ for any $i \in \Zd$.
\end{Theorem}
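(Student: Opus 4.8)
The plan is to follow the two-step strategy announced in the introduction: first construct a candidate measure $P$ on $\Omega$ as a cluster point (in a suitable topology) of finite-volume approximations, using the specific entropy as the compactness tool, and then identify $P$ as a weak solution of \eqref{eq:1} by recognizing it as a minimizer of a free energy functional. Concretely, I would first fix an increasing sequence of boxes $\Lambda_n \nearrow \Zd$ and, on each $\Lambda_n$, solve a finite-dimensional SDE obtained by restricting \eqref{eq:1} to coordinates in $\Lambda_n$ (with, say, periodic or frozen boundary conditions and initial law $\mu_{\Lambda_n}$). Because $\mathbf{b}$ is $\Delta$-local and uniformly sublinear, each such finite system has a weak solution $Q_n$ by a Girsanov-type argument: the Novikov condition is controlled thanks to \eqref{eq:driftsslineaire} together with the finite second moment of $\mu$, and a Gronwall estimate gives $E_{Q_n}(\sup_{t}X_i(t)^2)$ bounded uniformly in $n$ and $i$. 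I would then spatially periodize/average $Q_n$ to obtain shift-invariant measures $P_n \in \Prob_s(\Omega)$ with $\mu$-compatible one-coordinate initial marginal and uniformly bounded specific entropy $\Is(P_n)$; the entropy bound comes from the Girsanov density, whose logarithm is controlled by $\tfrac12\int_0^1 \mathbf{b}_t^2\,dt$, hence by \eqref{eq:driftsslineaire} and the uniform second-moment bound.

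The next step is to pass to the limit. Since the level sets $\{\Is \le c\}$ are compact in the topology of local convergence (the key input from the entropy method of \cite{GH,D,DDG}), and since the uniform second-moment bound upgrades local convergence to something strong enough on the path space $\Omega=\C^{\Zd}$, the sequence $(P_n)$ has a cluster point $P \in \Prob_s(\Omega)$ with $\Is(P) \le \liminf \Is(P_n) < \infty$ (lower semicontinuity of specific entropy) and with $E_P(\sup_t X_i(t)^2)<\infty$ and initial marginal $\mu$ inherited from the uniform moment bound and Fatou. This already delivers the stated entropy and moment properties of $P$.

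It then remains to check that $P$ actually solves \eqref{eq:1}, i.e. that under $P$ the process $X_i(t) - X_i(0) - \int_0^t \mathbf{b}_s(\theta_i X)\,ds$ is, coordinatewise, a family of independent Brownian motions adapted to the canonical filtration. The plan here, following \cite{dprz}, is variational: introduce the free energy functional $F(\cdot)$ on $\Prob_s(\Omega)$ built from the specific relative entropy with respect to $\P$ and from the drift through the relevant Hamiltonian (essentially $\Is(\cdot\,;\P)$ corrected by the space-time integral of $\mathbf{b}$ against the canonical increments), show that $F$ is bounded below and lower semicontinuous on the entropy level sets, that the approximations $P_n$ are asymptotic minimizers, hence that $P$ is a minimizer, and finally that any shift-invariant minimizer with finite specific entropy must have the Girsanov-type conditional structure that is exactly equivalent to being a weak solution of \eqref{eq:1} (an infinite-dimensional analogue of the finite-dimensional Girsanov characterization; this is where adaptedness and sublinearity of $\mathbf{b}$ are used to make the stochastic integrals and the entropy terms well defined). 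The main obstacle I anticipate is precisely this identification step: one must carefully control the non-local, non-bounded drift when localizing the variational problem — in particular showing that the relevant densities and entropy decompositions behave well under the limit despite $\mathbf{b}$ being only sublinear and merely adapted (not continuous, not Markov) — and verifying that the minimizer's martingale structure genuinely produces independent Brownian motions in the infinite-volume limit rather than only in finite volume.
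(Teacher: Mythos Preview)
Your overall architecture matches the paper's proof closely: finite-volume approximations via Girsanov, spatial periodization/averaging to get shift-invariant $\bar P_n$, uniform specific-entropy bound, compactness of entropy level sets in the local topology, and identification of the limit via a free energy functional. The moment bounds and the initial-law identification go exactly as you sketch.

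The one place where your plan diverges from the paper, and where I think there is a real gap, is the mechanism for showing that the limit $\bar P$ is a zero (minimizer) of the free energy. You propose to argue that $F$ is lower semicontinuous on entropy level sets and that the $\bar P_n$ are asymptotic minimizers. The difficulty is that the free energy contains the term $E_Q\big(\int_0^1 \mathbf{b}_t\, dX_0(t)\big)$, a stochastic integral against a non-regular, unbounded integrand; establishing lower semicontinuity of this in the $\mathcal{L}$-topology is delicate and is not what the paper does. Instead, the paper first proves a structural fact: any $Q\in\Prob_s(\Omega)$ with finite specific entropy is automatically a Brownian semimartingale with some adapted drift $\beta\in L^2(dt\otimes Q)$ (this is a F\"ollmer--Wakolbinger type result, and is needed even to make $\Is^{\mathbf b}$ well defined). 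Then, crucially, it shows that the DLR-type equilibrium equations $\bar P = \int \Pi^{H,+}_\Lambda(\xi,\cdot)\,\bar P(d\xi)$ pass to the $\mathcal{L}$-limit, because the kernels $\Pi^{H,+}_\Lambda$ are \emph{local} in $\xi$ (this is exactly where the locality assumption on $\mathbf b$ is used). From this Gibbs property one computes $\Is^{\mathbf b}(\bar P)\le 0$ directly, bypassing any semicontinuity argument; combined with the representation of $\Is^{\mathbf b}$ as a limit of nonnegative relative entropies, this gives $\Is^{\mathbf b}(\bar P)=0$. The final drift identification $\beta=\mathbf b$ then follows from expanding $\Is^{\mathbf b}(\bar P)=0$ as $\tfrac12 E_{\bar P}\int_0^1(\beta_t-\mathbf b_t)^2\,dt \le 0$.

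So: keep your construction and tightness steps as written, but replace ``lower semicontinuity of $F$ $+$ asymptotic minimizers'' by ``DLR equations pass to the local limit (by locality of the specification), hence $\bar P$ is Gibbs, hence $\Is^{\mathbf b}(\bar P)=0$''. You should also make explicit the Brownian-semimartingale representation of any finite-specific-entropy $Q$, since without it neither the free energy nor the final $L^2$ identification of the drift makes sense.
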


In other words, there exists a probability measure $ P \in \Prob_s(\O)$
with $\mu$ as  marginal at time 0 such that the processes
$ \big( t\mapsto X_i(t)- X_i(0) -  \int_0^t {\bf b}_s(\theta_{i} X)ds \big)_{ i \in \Z^d, t \in [0,1]}$
builds a family of $ P$-independent Brownian motions. 
Moreover the finiteness of the specific entropy and the second moment of $\mu$  propagates at the path level.

\begin{remark}\label{rem:locality}
% When ${\bf b}^{(2)}$ vanishes, the drift ${\bf b}$ is uniformly bounded and the 
% integrability condition 
% $\int x_0^2 \, \mu (dx) < +\infty$ on the initial law can be cancelled. 
% The proof of this claim is a straightforward simplification of the proof of Theorem \ref{mainTH}. 
The locality assumption on the drift is relevant: it provides that the approximating
dynamics $P^{\xi,\L}$ (defined in Section \ref{sec:Finiteapprox}) depend on the outside configuration 
$\xi$ 
only through its value in a bounded neighbourhood of $\L$. This will allow to recognise the probability kernels 
$(\Pi^{H,+}_\L)_\L$ (see \eqref{PiLambdaH+}) as {\em local} specifications of the probability $\bar P$ and 
 therefore leads to the identification of $\bar P$ as solution of \eqref{eq:1}.
% Without loss of generality, we consider the same $\Delta$-locality for
% both drift parts ${\bf b}^{(1)} $ and  ${\bf b}^{(2)}$.
\end{remark}

% \begin{remark} \label{sublinearity}(Sublinear growth of the drift)
% 
% Under the above assumptions the drift functional ${\bf b}$ has a  sublinear growth, or equivalently 
% \begin{equation} \label{eq:driftsslineaire}
% {\bf b}_t(\o)^2 \le C\Big(1 + \sum_{j \in \Delta} \o_j^*(t)^2\Big), \quad \o \in \O,
% \end{equation}
% where $ C:= 2(\Vert {\bf b }^{(1)}\Vert_\infty^2 + L^2)$ and the paths 
% $\o_j^*(.)$ were defined in \eqref{eq:omegaEtoile}.
% \end{remark}
% 

We now give a more precise description of the set $\S$ of 
weak solutions of the SDE \eqref{eq:1} without prescribing the initial condition.
$$
\S := \Big\{  P \in \Prob_s(\O)\textrm{ solution of } \eqref{eq:1}  \textrm{ with } 
P \circ X(0)^{-1} \in \Prob_{s,2} (\R^\Zd) \Big \}.
$$

\begin{Theorem}\label{representationTH}
The set  $\S$  is convex and its extremal points are ergodic solutions. 
In particular, for any ergodic probability measure $\mu \in \Prob_{s,2} (\R^\Zd)$ 
there exists an ergodic weak solution $P$ 
of the SDE \eqref{eq:1} which admits $\mu$ as marginal law at time 0.
\end{Theorem}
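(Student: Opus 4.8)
The plan is to deduce Theorem \ref{representationTH} from Theorem \ref{mainTH} together with standard ergodic-decomposition theory for shift-invariant measures on a product space.

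\medskip

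\noindent\textbf{Step 1: Convexity of $\S$.} First I would check that $\S$ is a convex set. This is essentially immediate: the defining property ``$P$ is a weak solution of \eqref{eq:1}'' means that under $P$ the processes $M_i(t) := X_i(t)-X_i(0)-\int_0^t {\bf b}_s(\theta_i X)\,ds$ form a family of independent Brownian motions, equivalently that a countable family of functionals of the form ``$M_i(t)$ is a continuous martingale with the prescribed bracket, and the $M_i$ are independent'' holds $P$-a.s.; each such condition can be phrased as the vanishing of the $P$-expectation of countably many bounded test functionals (using e.g. exponential/characteristic-function martingale identities $E_P[\,(e^{i\lambda(M_i(t)-M_i(s))+\frac{\lambda^2}{2}(t-s)}-1)\,Y\,]=0$ for bounded $\F_s$-measurable $Y$, and product identities encoding independence). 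Each such identity is linear in $P$, hence preserved under convex combinations; space-shift invariance and the condition $P\circ X(0)^{-1}\in\Prob_{s,2}(\R^\Zd)$ are likewise stable under mixtures (for the latter one uses that $\mu\mapsto\Is(\mu)$ and $\mu\mapsto\int x_0^2\,\mu(dx)$ are affine, resp. affine, on $\Prob_s(\R^\Zd)$, so finiteness is inherited by any component of a mixture with finite value — actually one only needs that a convex combination of elements of $\Prob_{s,2}$ stays in $\Prob_{s,2}$, which is clear since specific entropy is affine and the second moment is linear). Hence $\S$ is convex.

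\medskip

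\noindent\textbf{Step 2: Extremal points are ergodic.} Let $P$ be extremal in $\S$ and suppose $P$ is not ergodic with respect to the group $(\theta_i)_{i\in\Zd}$. Then there is a shift-invariant event $A\in\F$ with $0<P(A)<1$, and $P = P(A)\,P(\cdot\mid A) + P(A^c)\,P(\cdot\mid A^c)$. The two conditional measures $P(\cdot\mid A)$ and $P(\cdot\mid A^c)$ are again shift-invariant (because $A$ is). The key point to verify is that they still solve \eqref{eq:1} and still have initial marginal in $\Prob_{s,2}(\R^\Zd)$. For the SDE property: each defining martingale identity $E_P[(\,\cdots)\,Y]=0$ is of the form $E_P[Z]=0$ with $Z$ a (bounded, or integrable) $P$-centred functional; but that single identity does not survive conditioning in general. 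Instead one uses the stronger fact that under a weak solution the conditional law, given the initial $\sigma$-field (or given any shift-invariant, hence in particular tail-type, $\sigma$-field), still makes the $M_i$ independent Brownian motions — more precisely, by the disintegration and the fact that ``$M_i$ is a BM independent of the rest'' can be tested locally in time against $\F_s$-measurable sets while $A$ is shift-invariant, one shows $E_P[(\,\cdots)\,Y\,\1_A]=0$ for all such $Y$, because $\1_A$ is a legitimate bounded test random variable that commutes with the martingale structure (it is measurable with respect to the full $\sigma$-field, and the martingale identity $E_P[(\,e^{i\lambda(M_i(t)-M_i(s))+\cdots}-1)\,Y\,]=0$ holds for \emph{all} bounded $\F_s$-measurable $Y$; replacing $Y$ by $Y\1_A$ requires $A\in\F_s$, which fails — so here one genuinely needs the BM-with-respect-to-the-enlarged-filtration viewpoint). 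Concretely: since $A$ is shift-invariant it lies in the tail $\sigma$-field ``at spatial infinity'', and adding a tail-measurable set to the filtration does not destroy the Brownian/independence property of the $M_i$ (a Brownian motion remains Brownian when one enriches the filtration by an independent — here asymptotically independent — $\sigma$-field; a clean way is to note $A$ can be approximated by events depending on far-away coordinates, on which the $M_i$ for $i$ in a fixed finite box are, by independence of the driving Brownian motions and locality, asymptotically independent). Thus $P(\cdot\mid A)$ and $P(\cdot\mid A^c)$ lie in $\S$, contradicting extremality of $P$. Hence every extremal point of $\S$ is ergodic, and conversely any ergodic element of $\S$ is extremal there (an ergodic shift-invariant measure is extremal in the convex set $\Prob_s(\O)$ and a fortiori in the smaller convex set $\S$).

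\medskip

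\noindent\textbf{Step 3: Existence of an ergodic solution with prescribed ergodic initial law.} Given an ergodic $\mu\in\Prob_{s,2}(\R^\Zd)$, Theorem \ref{mainTH} furnishes a shift-invariant weak solution $P\in\S$ with $P\circ X(0)^{-1}=\mu$, and moreover $\Is(P)<\infty$. Decompose $P$ into its ergodic components: $P = \int Q\,\lambda(dQ)$ with $\lambda$ a probability measure supported on ergodic shift-invariant measures on $\O$. The time-$0$ marginal decomposes accordingly, $\mu = \int (Q\circ X(0)^{-1})\,\lambda(dQ)$, and the right-hand side is an integral decomposition of $\mu$ into shift-invariant measures on $\R^\Zd$; since $\mu$ is \emph{ergodic} it is extremal, hence $Q\circ X(0)^{-1}=\mu$ for $\lambda$-a.e.\ $Q$. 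It remains to know that $\lambda$-a.e.\ component $Q$ is again a weak solution and lies in $\Prob_{s,2}(\R^\Zd)$ at time $0$: the latter follows from $Q\circ X(0)^{-1}=\mu\in\Prob_{s,2}$; for the former, the SDE property passes to almost every ergodic component by exactly the argument of Step 2 applied with $A$ ranging over a countable generating algebra of the shift-invariant $\sigma$-field (equivalently: the ergodic decomposition is a disintegration over the invariant $\sigma$-field, and ``$P$ solves \eqref{eq:1}'' descends to the conditional measures given that $\sigma$-field). Pick any such $Q$: it is an ergodic weak solution of \eqref{eq:1} with initial marginal $\mu$, as claimed.

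\medskip

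\noindent The main obstacle is Step 2 / the solution-descent part of Step 3: verifying that the property of being a weak solution of \eqref{eq:1} is stable under conditioning on a shift-invariant event (equivalently, descends to ergodic components). The naive linearity argument used for convexity does \emph{not} apply, because the martingale test identities are tested against $\F_s$-measurable functionals, whereas a shift-invariant set need not be $\F_s$-measurable. The right resolution is that a shift-invariant event is measurable with respect to the spatial tail $\sigma$-field, and — using the independence of the driving Brownian motions and the locality of ${\bf b}$ (so that $M_i$ for $i$ in a finite box depends, up to negligible error, only on coordinates near that box) — enriching the filtration by such a tail event preserves the Brownian motion property of each $M_i$ and their mutual independence; hence the defining identities survive conditioning. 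This is the one place where the locality hypothesis \eqref{driftlocal}, already flagged in Remark \ref{rem:locality}, does real work in the representation theorem. Everything else (affine character of $\Is$ and of the second moment, extremality of ergodic measures in $\Prob_s$, uniqueness of the ergodic decomposition, extremal-point identification) is standard.
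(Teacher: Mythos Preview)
Your overall strategy mirrors the paper's --- apply Theorem~\ref{mainTH}, decompose into ergodic pieces, check each piece lies in $\S$ --- but the crucial stability step (your Step~2, echoed in Step~3) is where the approaches diverge, and yours has a real gap. The paper does not attempt to verify directly that the martingale/Brownian identities survive conditioning on a shift-invariant event. Instead it invokes Theorem~\ref{Teq}: under the standing hypotheses (shift-invariance, finite $\Is$, initial marginal in $\Prob_{s,2}$), being a weak solution of \eqref{eq:1} is \emph{equivalent} to being a Gibbs measure for the local specification $(\Pi^{H,+}_\L)_\L$. Hence $\S$ coincides with the set of shift-invariant Gibbs measures satisfying those side constraints, and Preston's general extremal-decomposition theory (\cite{Pr}, Theorems~2.2 and~4.1) then delivers for free that this set is convex with ergodic extremals; affineness of $\Is$ and linearity of the second moment carry the side constraints to the components. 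The second assertion of the theorem is then obtained exactly as in your Step~3.

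Your proposed rescue --- ``$A$ lies in the spatial tail, $M_i$ is spatially local, so enrich the filtration by $\sigma(A)$ and keep the Brownian property'' --- does not go through as written. Independence of the driving Brownian motions $(M_j)_j$ from one another and from $X(0)$ is a statement about the law of $(X(0),M)$; it says nothing about independence of a fixed $M_i$ from a tail event in $\sigma(X)$, because without pathwise uniqueness (explicitly unavailable here) you cannot transport a shift-invariant $A\in\sigma(X)$ back to an event in $\sigma(X(0),M)$ and exploit the product structure there. Under a generic $P\in\S$ the coordinates $X_j$ may be correlated at arbitrary range, so your ``asymptotic independence'' is precisely the thing that would need proof, not an input. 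The Gibbs route is not a detour but the mechanism: the DLR identity $P(\cdot\,|\,\G_\L)=\Pi^{H,+}_\L$ is already a conditioning statement, and Preston's machinery is exactly what converts it into stability of the solution set under conditioning on the invariant $\sigma$-field.
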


\noindent
More precisely, each probability measure $P$ in $\S$ admits a unique representation in
the following way:
$$ P=\int_\Theta \pi(u,.)\vartheta(du),$$
where $(\Theta,\mathcal{T},\vartheta )$ is  an auxiliary  probability space and $\pi$ is a kernel  on $(\Theta,\F)$ such that\\
({\it i}) for each $F\in\F$, $\pi(.,F)$ is $\mathcal{T}$-measurable and \\
({\it ii})   for each $u\in\Theta$, $\pi(u,.)$ is an ergodic solution in $\S$.\\
This theorem is proved in Section \ref{sec:prop} which is devoted to the Gibbs structure of the solutions 
of \eqref{eq:1}. The proof involves the representation of Gibbs measures by extremal ones.\\

%From Theorems \ref{mainTH} and \ref{representationTH}, we obtain the following corollary

%\begin{Corollary}\label{cor}
%The infinite-dimensional stochastic differential equation \eqref{eq:1} admits, for any initial ergodic condition $\mu \in \Prob_s (\R^\Zd)$ 
%with finite specific entropy, at least, one weak shift invariant ergodic solution $P$ with finite specific entropy.
%\end{Corollary}
Let us note that our approach leads to the explicit construction of
a particular solution but do not allow to obtain 
a uniqueness result. For sake of completeness, let us recall a recent result answering this question, 
obtained via the cluster expansion method,
see \cite{RR14} Corollary 2.4. 
It only concerns the  {\em perturbative regime}, 
%in a double way
since 
%the initial condition has to be close to a product measure and 
the dynamics has to be close to a free dynamics.

\begin{Proposition}\label{uniqueness}
Consider the infinite-dimensional SDE \eqref{eq:1} with a  drift of the form
$$ {\bf b}_t(\o):= - \frac{1}{2} \varphi' (\o_0(t)) + 
{\bf \tilde b}_t(\o_{\Delta}(s), s \in [0,t]))
$$
where $\varphi$ is a smooth ultracontractive self-potential 
$($i.e. the semigroup of the associated one-dimensional gradient diffusion 
maps $L^2(m)$ into $L^\infty(m))$. Take as initial condition 
the stationary measure of the free dynamics: $\mu(dx) = \otimes_{i \in \Zd} e^{-\varphi(x_i)} dx_i$. 
If the interaction term ${\bf \tilde b}$ admits a uniform norm which is {\em sufficiently small},
%and if the initial condition is a high temperature Gibbs measure, 
then \eqref{eq:1} admits a {\em unique} weak solution.
%, which is, moreover, Gibbs at any time $t\in[0,1]$.
\end{Proposition}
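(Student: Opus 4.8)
The plan is to translate the problem into the language of Gibbs measures on the path space $\O$ and to run a convergent cluster expansion, as in \cite{RR14}. As reference measure I would use not the free Brownian dynamics $\P$ but the law $Q$ on $\O$ of the system of \emph{independent} one-dimensional gradient diffusions $dY_i(t)=-\tfrac12\varphi'(Y_i(t))\,dt+dB_i(t)$ with i.i.d.\ initial data distributed according to $e^{-\varphi(z)}\,dz$. Since $\varphi$ is a self-potential, $e^{-\varphi(z)}\,dz$ is reversible for each coordinate, so $Q$ is stationary both in time and in space and its time-$0$ marginal is exactly the prescribed $\mu=\otimes_i e^{-\varphi(x_i)}\,dx_i$. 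The ultracontractivity of $\varphi$ is what makes this reference dynamics sufficiently regular: the transition kernel of the one-dimensional $\varphi$-diffusion is bounded and strongly smoothing, which is the source of the exponential estimates needed below.

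Via Girsanov's theorem, any weak solution $P$ of \eqref{eq:1} with initial law $\mu$ satisfies, on each finite box $\L\subset\Zd$ and conditionally on an exterior configuration $\xi$,
\begin{equation*}
\frac{dP^{\xi}_\L}{dQ^{\xi}_\L}=\frac{1}{Z^{\xi}_\L}\,
\exp\Big(\sum_{i\in\L}\int_0^1 {\bf \tilde b}_t(\theta_i X)\,dX_i(t)-\tfrac12\sum_{i\in\L}\int_0^1 {\bf \tilde b}_t(\theta_i X)^2\,dt\Big),
\end{equation*}
and conversely these densities define a shift-invariant Gibbsian specification whose single-site interaction has finite range, controlled by $\mathrm{diam}(\Delta)$ (this is exactly the use of locality stressed in Remark \ref{rem:locality}). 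One then checks that $P$ satisfies the corresponding DLR equations, so that $\S$, restricted to the initial law $\mu$, is contained in the set $\G$ of Gibbs measures for this specification; uniqueness of the weak solution is thereby reduced to uniqueness of the Gibbs measure.

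For the Gibbs side I would run a polymer (cluster) expansion of the partition functions $Z^{\xi}_\L$, uniformly in $\L$ and $\xi$. The activity of a polymer is governed by exponential moments, under $Q$, of the stochastic integral $\int_0^1 {\bf \tilde b}_t(\theta_i X)\,dX_i(t)$; by Novikov-type bounds combined with the ultracontractive smoothing of the $\varphi$-semigroup these moments are finite and become arbitrarily small as $\|{\bf \tilde b}\|_\infty\to 0$. Below an explicit threshold for $\|{\bf \tilde b}\|_\infty$ the Koteck\'y--Preiss convergence criterion holds, which yields at once the existence of an infinite-volume Gibbs measure, exponential decay of correlations, and the loss of dependence on $\xi$ as $\L\nearrow\Zd$ --- hence uniqueness of the Gibbs measure. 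This unique measure is automatically space-shift invariant, because the specification is, and it keeps $\mu$ as its time-$0$ marginal; via the reduction above it is the unique element of $\S$ with initial law $\mu$.

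The step I expect to be the main obstacle is the control of the stochastic-integral term in the Girsanov density: in contrast with the bounded-Hamiltonian setting, $\int_0^1{\bf \tilde b}_t(\theta_i X)\,dX_i(t)$ is only as integrable as the reference dynamics permits, and the whole expansion converges only if its exponential moments are both finite and small. This is precisely where ultracontractivity of $\varphi$ is indispensable: it upgrades the one-dimensional $\varphi$-diffusion to one with bounded, rapidly mixing transition densities, delivering the uniform exponential bounds that drive the cluster expansion. A softer route to uniqueness alone --- losing the analyticity by-products --- would replace the cluster expansion by a Dobrushin uniqueness criterion for the same specification, feasible once $\|{\bf \tilde b}\|_\infty$ is small enough that each single-site conditional law depends only weakly on its neighbours.
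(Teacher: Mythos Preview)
The paper does not give its own proof of this proposition: it is explicitly introduced as a recall of Corollary~2.4 in \cite{RR14}, obtained via cluster expansion, and no argument is reproduced. Your sketch is a faithful outline of precisely that method --- reference dynamics given by the independent $\varphi$-gradient diffusions, Girsanov densities yielding a finite-range Gibbsian specification on path space, and a polymer expansion controlled through the ultracontractive smoothing and the smallness of $\|{\bf \tilde b}\|_\infty$ --- so there is nothing to compare beyond noting that your proposal matches the cited approach.
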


%%%%%%%%%%%%%%%%%%%%%%%%%%%%%%%%%%%%%%%%%%%%%%%%%%%%%%%%%%%%%%%%%%%%%%%%%%%

\section{Proof of the main Theorem \ref{mainTH}}\label{PMT}

 In this section, we present the proof of  Theorem \ref{mainTH} divided in several steps. 
 In Section $\ref{sec:Finiteapprox}$ we define 
finite-dimensional approximations of the dynamics  \eqref{eq:1}, and 
prove some bounds of their second moment in a suitable weighted $\ell^2$-space. 
%The sequence of approximate solutions of \eqref{eq:1} is defined in section \ref{sec:approxsol} 
%as a sequence of stationary empirical fields based on finite volume solutions with vanishing fixed external configuration.
In section \ref{sec:tightness},
we show that a well chosen  sequence of approximate solutions  with vanishing external configuration 
is tight for the topology of local convergence on
$\O$
since their specific entropies are uniformly bounded. 
Then, the identification of any limit point as a Brownian semimartingale with appropriate kernels as local
specifications is done in Section \ref{sec:Gibbsstructure}.
In Section \ref{sec:minenergy}, using the previous sections, we prove that any limit point is a 
%minimum, indeed a 
zero of the free energy                       
functional, which is computed as the difference between the specific entropy and 
the specific energy. Thus, in Section \ref{sec:limit=sol}, we
complete the proof by identifying the zeros of the free energy as solutions of \eqref{eq:1}.

%%%%%%%%%%%%%%%%%%%%%%%%%%%%%%%%%%%%%%%%%%%%%%%%%%%%%%%%%%%%%%%%%%%%%%%%%%%
\subsection{ Finite-dimensional dynamics. Some  $\ell^2$-bounds.  }\label{sec:Finiteapprox}
%%%%%%%%%%%%%%%%%%%%%%%%%%%%%%%%%%%%%%%%%%%%%%%%%%%%%%%%%%%%%%%%%%%%%%%%%%%

For any finite subset $\L\subset \Z^d$ and any fixed path $\xi\in\Omega$,
we define the $\Lambda$-approximation of the random dynamics \eqref{eq:1} with outside
frozen configuration $\xi_{\L^c}$ and initial fixed condition $\xi(0)$ by means of  the SDE
\begin{equation}\label{eq:FDSDE2}
 \left\{ 
\begin{array}{ll}
dX_i(t) &= {\bf b}_t(\theta_{i} (X_\Lambda \xi_{\Lambda^c})) \, dt + dB_i(t)
\ , \ \ i \in \Lambda, \ t \in [0,1],\\
X_\L(0) &= \xi_\L(0) ,\\
X_{\L^c} &\equiv \xi_{\L^c} ,
\end{array}
\right.
\end{equation}
where the configuration $X_\Lambda \xi_{\Lambda^c}$ is the concatenation 
of the configuration $X$ on $\Lambda$ and the configuration $\xi$ outside
$\Lambda$.
Note that this SDE depends on $\xi_\L$ only via its initial value $\xi_\L(0)$.\\
These approximating dynamics will be used in Section 
\ref{sec:tightness} with $\xi \equiv 0$ and in Section 
\ref{sec:Gibbsstructure} for a general outside configuration $\xi$.

Following the framework of \cite{ss} and \cite{lr} we introduce 
 the auxiliary Hilbert subspace of $\R^{\Z^d}$
defined as weighted $\ell^2$-space:
$$
\Sp := \{ x \in \R^{\Z^d}, \Vert x\Vert_{\gamma}^2:= 
\sum_{i \in \Z^d} \gamma_i x_i^2 < + \infty\} 
$$
where $\gamma=(\gamma_i)_i$ is the summable sequence 
$\gamma_i:=\frac{1}{(1 + |i|)^{d+1}}, i \in \Zd$. 

\begin{Lemma}\label{FEG}
For any $\xi\in \O$, the SDE \eqref{eq:FDSDE2} 
admits a weak solution denoted by $P^{\xi,\L}$. 
Moreover there exists a constant $K>0$ which does not depend on $\L$ such that
\begin{equation}\label{estimee}
E_{P^{\xi,\L}}\Big( \Vert  X^*(1)\Vert_\gamma^2 \Big) \le 
K\Big( 1  +   \Vert \xi_\L(0)\Vert_\gamma^2
+  \Vert  \xi^*_{\L^c}(1) \Vert^2_\gamma \Big).
\end{equation}

%\begin{equation}\label{estimee}
%E_{P^{\xi,\L}}\Big( \Vert \sup_{t\in[0,1]} X(t)\Vert_\gamma^2 \Big) \le 
%K\Big( 1  +   \Vert \xi_\L(0)\Vert_\gamma^2
%+  \Vert \sup_{t\in[0,1]} \xi_{\L^c}(t) \Vert^2_\gamma \Big).
%\end{equation}

 \end{Lemma}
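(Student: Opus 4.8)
The plan is to prove existence of a weak solution to the finite-dimensional SDE \eqref{eq:FDSDE2} and then establish the uniform $\ell^2$-bound \eqref{estimee}. For the first part, since the drift ${\bf b}_t(\theta_i(X_\L\xi_{\L^c}))$ is adapted and, by \eqref{eq:driftsslineaire}, satisfies a sublinear growth bound, I would construct the weak solution via Girsanov's theorem: start from the free system $\P^\L$ on $\C^\L$ (a finite product of Wiener measures started at $\xi_\L(0)$) together with the frozen path $\xi_{\L^c}$ outside $\L$, and define $P^{\xi,\L}$ through the Girsanov density $\exp\big(\sum_{i\in\L}\int_0^1 {\bf b}_s(\theta_i(X_\L\xi_{\L^c}))\,dX_i(s) - \tfrac12\sum_{i\in\L}\int_0^1 {\bf b}_s(\theta_i(X_\L\xi_{\L^c}))^2\,ds\big)$. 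One must check the Novikov-type integrability condition making this density a genuine probability density; since the drift grows at most linearly in the running suprema $\o_j^*$, and the reference measure is Gaussian, the exponential moments needed are finite (a standard truncation/localization argument, replacing the exact Novikov condition by the sufficient condition of Beneš, or by a stopping-time approximation, handles the linear growth). Under $P^{\xi,\L}$ the process $B_i(t):=X_i(t)-\xi_i(0)-\int_0^t{\bf b}_s(\theta_i(X_\L\xi_{\L^c}))\,ds$, $i\in\L$, is then a family of independent Brownian motions, which is exactly a weak solution of \eqref{eq:FDSDE2}.

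For the $\ell^2$-estimate \eqref{estimee}, I would work coordinate by coordinate. For $i\in\L$, write $X_i(t)=\xi_i(0)+\int_0^t {\bf b}_s(\theta_i(X_\L\xi_{\L^c}))\,ds+B_i(t)$, so that
\[
X_i^*(1)^2 \le 3\,\xi_i(0)^2 + 3\Big(\int_0^1 |{\bf b}_s(\theta_i(X_\L\xi_{\L^c}))|\,ds\Big)^2 + 3\,B_i^*(1)^2 .
\]
By Cauchy--Schwarz and \eqref{eq:driftsslineaire}, the middle term is bounded by $3C\big(1+\sum_{j\in\Delta}(\theta_i X_\L\xi_{\L^c})_j^*(1)^2\big)=3C\big(1+\sum_{j\in i+\Delta}(X_\L\xi_{\L^c})_j^*(1)^2\big)$. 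Multiplying by $\gamma_i$, summing over $i\in\L$, and taking $E_{P^{\xi,\L}}$, the Brownian contribution gives $\sum_i\gamma_i E(B_i^*(1)^2)\le 4\sum_i\gamma_i<\infty$ by Doob's inequality, the $\xi_i(0)^2$ term gives $3\Vert\xi_\L(0)\Vert_\gamma^2$, and the drift term produces $3C\sum_{i\in\L}\gamma_i\big(1+\sum_{j\in i+\Delta}E\big((X_\L\xi_{\L^c})_j^*(1)^2\big)\big)$. The double sum over $i$ and $j\in i+\Delta$ can be reorganized using that $\Delta$ is finite and that the weight sequence $\gamma$ is, up to a multiplicative constant depending only on $\Delta$ and $d$, shift-stable in the sense that $\sum_{i: j\in i+\Delta}\gamma_i \le \kappa_\Delta\,\gamma_j$ for all $j$ (this follows from $\gamma_i=(1+|i|)^{-(d+1)}$ and $|i-j|$ bounded on $i+\Delta\ni j$). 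Splitting according to whether $j\in\L$ or $j\in\L^c$ (where $X_j=\xi_j$), this yields an inequality of the form
\[
E_{P^{\xi,\L}}\big(\Vert X^*(1)\Vert_\gamma^2\big)\le a + b\,E_{P^{\xi,\L}}\big(\Vert X^*(1)\Vert_\gamma^2\big) + c\,\Vert\xi_\L(0)\Vert_\gamma^2 + c'\,\Vert\xi^*_{\L^c}(1)\Vert_\gamma^2,
\]
with $a,c,c'$ independent of $\L$; but the constant $b$ in front of the recursive term is \emph{not} small, so one cannot simply absorb it.

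The main obstacle — and the point requiring care — is precisely this absorption issue: the naive estimate closes only if the drift contribution comes with a small enough weighted norm. To get around it I would introduce, for $\rho>0$, the stopping times $\tau_\rho:=\inf\{t:\Vert X^*(t)\Vert_\gamma^2\ge\rho\}\wedge 1$ and derive the bound on the stopped process, where all expectations are finite a priori; then the recursive term is genuinely finite and one exploits that $\int_0^t(\cdots)^2 ds$ against the small increment of time, combined with Gronwall's lemma in the time variable (writing $u(t):=E(\Vert X^*(t\wedge\tau_\rho)\Vert_\gamma^2)$ and obtaining $u(t)\le \text{const}\cdot(1+\Vert\xi_\L(0)\Vert_\gamma^2+\Vert\xi^*_{\L^c}(1)\Vert_\gamma^2)+\text{const}\int_0^t u(s)\,ds$, using $\big(\int_0^t|{\bf b}_s|ds\big)^2\le t\int_0^t{\bf b}_s^2 ds$ and Fubini to push the weighted sum inside). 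Gronwall then gives a bound on $u(1)$ uniform in $\rho$ and in $\L$, and letting $\rho\to\infty$ with monotone convergence yields \eqref{estimee} with $K$ depending only on $C$, $\Delta$, $d$ and the fixed weight sequence $\gamma$. The key structural facts that make everything go through are the locality of the drift (so only finitely many neighbours $i+\Delta$ enter), the sublinear growth \eqref{eq:driftsslineaire} (so the drift is controlled by the running suprema appearing on the left), and the summability and near-shift-invariance of $\gamma$.
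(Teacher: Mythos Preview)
Your proposal is correct and takes essentially the same approach as the paper: both arguments reduce to a Gronwall inequality in the time variable for $t\mapsto\sum_i\gamma_i\,E_{P^{\xi,\L}}(X_i^*(t)^2)$, using the sublinear growth \eqref{eq:driftsslineaire} of ${\bf b}$ together with the shift-stability $\sum_{i:\,j\in i+\Delta}\gamma_i\le C'\gamma_j$ of the weight sequence. The only cosmetic difference is that the paper obtains the coordinate-wise inequality via It\^o's formula applied to $X_i(t)^2$ (controlling the resulting local martingale by Doob's inequality) rather than your direct decomposition $X_i=\xi_i(0)+\int_0^\cdot{\bf b}\,ds+B_i$, and does not introduce the localizing stopping times $\tau_\rho$.
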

\begin{proof}
First, since the growth of {\bf b} is sublinear, the growth of the drift of \eqref{eq:FDSDE2} is sublinear too.
Therefore
$
t \mapsto \exp \Big( \sum_{i \in \L} \int_0^t{\bf b}_s(\theta_{i} (X_\Lambda \xi_{\Lambda^c})) dB_i(s) 
- 1/2 \int_0^t {\bf b}_s^2(\theta_{i} (X_\Lambda \xi_{\Lambda^c})) \, ds\Big)
$
is a $\otimes_{i \in \L} W^{\xi_i(0)}$-martingale.
Applying Girsanov theory, one obtains a weak solution to \eqref{eq:FDSDE2}.

To obtain the upper bound (\ref{estimee}), 
we take our inspiration from (4.18) in \cite{ss}
%(see also Lemma 4.2.9 in \cite{r} 
who only treated the particular Markovian case. 

First fix $i \in \L$. 
By It\^o formula applied to $X_i(t)^2$ and (\ref{eq:driftsslineaire}), one gets for the maximal path
{\small 
$$
X_i^*(t)^2 \leq X_i(0)^2 + M_t^* + \int_0^t\Big( X_i^*(s)^2 + C\big( 1 + \sum_{k \in \L}X_k^*(s)^2 \1_{k \in i+\Delta}
+ \sum_{k \in \L^c}\xi_k^*(s)^2 \1_{k \in i+\Delta} \big) \Big) ds + t
$$
}
where $M_t$ is a martingale with quadratic variation $4 \int_0^t X_i(s)^2 ds$. Using Doob inequality,
$$
E(M_t^*) \leq \sqrt{E((M_t^*)^2)} \leq 2 \sup_{s \leq t}\sqrt{E(M_s^2)}\leq 
1 + \sup_{s\leq t}E(M_s^2) \leq 1 + 4 \int_0^t X_i^*(s)^2 ds .
$$
Therefore, denoting by $u_i(t)$ the function $t \mapsto E_{P^{\xi,\L}} (X_i^*(t)^2)$, we obtain 
{\small
\begin{eqnarray}
u_i(t) &\leq & \xi^2_i(0) + 1 + 4 \int_0^t u_i(s) ds \nonumber \\
&& +  \int_0^t \Big( u_i(s)+  C\big( 1 + \sum_{k \in \L}u_k(s) \1_{k \in i+\Delta}
+ \sum_{k \in \L^c}\xi_k^*(s)^2 \1_{k \in i+\Delta} \big) \Big) ds + t \nonumber\\
&\leq & 
\Big( \xi^2_i(0) + C+2 + C \sum_{k \in \L^c} \xi^*_k(1)^2 \1_{k \in i+\Delta}\Big) + 
\sum_{k } Q_{ik} \int_0^t u_k(s) ds \label{ineguiLambda}
\end{eqnarray}
}
where  the matrix $Q$ is given by  $Q_{ik}= (5+C) \1_{k \in \L \cap i+\Delta}$ for $ k \in \Zd$.\\
%, and $\xi^*:=\xi^*(1)= \sup_{0\leq s\leq 1}|\xi(s)|$. \\
For $i \in \L^c$, we consider the rough inequality
\begin{equation} \label{ineguinotLambda}
 u_i(t) \leq \xi_i^*(1)^2  + 
\sum_{k } Q_{ik} \int_0^t u_k(s) ds .
\end{equation}
Remark now that there exists a real number $C'>0$ depending only on  $\Delta$ but not on  $\L$,  such that
$$
\forall k \in \Zd, \quad  \sum_i \gamma_i Q_{ik} \leq C' \gamma_k .
$$
Thus, summing over $i$ the inequalities (\ref{ineguiLambda})  and (\ref{ineguinotLambda}) weighted by $\gamma$, we get
\begin{eqnarray*}
\sum_i \gamma_i u_i(t) &\leq & 
 \Vert \xi_\L(0)\Vert_\gamma^2 + (C+2) \sum_{i\in\L} \gamma_i + \frac{C'}{5 + C}
 \Vert \xi^*_{\L^+\smallsetminus\L}(1) \Vert^2_\gamma +  \Vert \xi^*_{\L^c}(1) \Vert^2_\gamma \\
&& + C' \int_0^t  \sum_{k } \gamma_k  u_k(s) ds ,
\end{eqnarray*}
where the term $\Vert \xi^*_{\L^c}(1) \Vert^2_\gamma$ could be equal to $+\infty$ if $\xi^*(1)\notin \Sp$. This leads by Gronwall's lemma to 
{\small 
\begin{eqnarray*}
E_{P^{\xi,\L}}\Big( \Vert  X^*(t) \Vert_\gamma^2 \Big)  &\leq & 
\Big( \Vert \xi_\L(0)\Vert_\gamma^2 + (C+2) \sum_{i\in\L} \gamma_i + \frac{C'}{5 + C}
 \Vert \xi^*_{\L^+\smallsetminus\L}(1) \Vert^2_\gamma +  \Vert \xi^*_{\L^c}(1) \Vert^2_\gamma \Big) e^{C' t}\\
 &\leq & 
 K \Big( 1 + \Vert \xi_\L(0)\Vert_\gamma^2 +  \Vert \xi^*_{\L^c}(1) \Vert^2_\gamma \Big) 
\end{eqnarray*}
}
for a constant $K$ which does not depend on $\L$ and is uniformly bounded for $t\in [0,1]$.
\end{proof}\\
In particular,  we deduce from the upper bound \eqref{estimee} that, under the assumptions $\xi^*(1)\in \Sp$, for any $j\in\Zd$, 
\begin{equation} \label{eq:majunifnorml2}
 E_{P^{\xi,\L}} \big(  X^*_j(1)^2\big) \le \gamma_j^{-1}K\Big( 1  +   \Vert  \xi^*(1) \Vert^2_\gamma \Big)  < + \infty.
\end{equation}
Note that this upper bound is uniform in $\L$ but not in $j$.

%%%%%%%%%%%%%%%%%%%%%%%%%%%%%%%%%%%%%%%%%%%%%%%%%%%%%%%%%%%%%%%%%%%%%%%%%%%
\subsection{ A tight sequence of approximate solution.}\label{sec:tightness}
%%%%%%%%%%%%%%%%%%%%%%%%%%%%%%%%%%%%%%%%%%%%%%%%%%%%%%%%%%%%%%%%%%%%%%%%%%%

 Take the increasing sequence of finite cubic volume $\Ln=\{-n,\ldots, n-1\}^d \subset \Z^d$. We define 
%For any finite-dimensional path $\omega_\Ln \in\C^\Ln$ we define its trivial extension $\omega^o_\Ln$ on the full configuration space  $\C^\Zd$ as follows:
%fixing the paths outside $\L$ equal to the null path. Precisely we defined $\omega_\Ln^{ext}$ on $\C^\Zd$ such that for all $i\in \Zd$,
%$$(\omega_\Ln^{o})_i=\left\{ 
%\begin{array}{ll}
%(\omega_\Ln)_i=\omega_i & \text{ if } i\in\Ln\\
%0 & \text{ if } i\notin \Ln
%\end{array}
%\right..$$
 the finite-volume approximation of the SDE \eqref{eq:1} on $\Lambda_n$ by 
\begin{equation}\label{eq:FDSDE}
 \left\{ 
\begin{array}{ll}
dX_i(t) &= {\bf b}_t(\theta_{i} (X_\Ln 0_{\Lambda_n^c})) \, dt + dB_i(t)
\ , \ \ i \in \Lambda_n, \ t \in [0,1]\\
X_\Ln(0) &\sim \mu_\Ln .
\end{array}
\right.
\end{equation}
Its solution $P_n$ exists thanks to Lemma \ref{FEG}, and it is given by

$$
P_n=\int P_{\L_n}^{\xi_{\L_n} 0_{\L_n^c},\L_n} \mu_\Ln(d\xi_\Ln(0)).
$$
With other words, $P_n$ is a weak solution of the SDE (\ref{eq:FDSDE2})  
with vanishing outside configuration and random initial condition following the law $\mu_\Ln$, 
restricted to the finite volume $\Ln$.

\noindent Since $\mu$ admits a
finite specific entropy,  $\mu_\Ln$ is absolutely continuous with respect to $m^{\otimes \Ln}$ (with density 
denoted by $f_\Ln$) and by Girsanov Theorem, 
for any $n$
\begin{eqnarray}\label{FVappoximation}
%F_\Ln(X)
\disp 
\frac{dP_n}{dW^{\otimes \Ln}}(X_\Ln) 
%&=& \frac{\mu_\Ln}{dm^{\otimes \Ln}}(X_\Ln(0)) \,
%\frac{dP_n^{m^{\otimes \Ln}}}{dW^{\otimes \Ln}}(X_\Ln)\nonumber \\
\disp 
&=& f_\Ln(X_\Ln(0))\, \exp \big(- H_\Ln (X_\Ln 0_{\Lambda_n^c})\big) \nonumber \\
\textrm{ where } \qquad && \nonumber \\
 H_\L (X) &=& -  \sum_{i\in\L} \Big( \int_0^1 {\bf b}_t(\theta_{i} X)\, dX_i(t) -\frac{1}{2} \int_0^1 {\bf b}^2_t(\theta_{i} X)\, dt\Big).
\end{eqnarray}
%Note that, due to the boundedness of {\bf b}, the functional $ H_\Ln$ is well-defined $W^{\otimes \Ln}$-a.s..
%: The canonical process is a Brownian motion under $W^{\otimes \Ln}$ and so,
%the stochastic integral in (\ref{FVappoximation}) is $W^{\otimes \Ln}$-almost surely well-defined. 

Since we aim at constructing a shift invariant solution of \eqref{eq:1}, we first introduce 
a space-periodisation  of $P_n$. Let
$P_n^{\text{\rm per}} \in \Prob (\O)$ be the probability measure under which the restrictions of the configurations on disjoint blocks 
$((\theta_{2kn}X)_\Ln)_{k\in\Zd}$ are independent and identically distributed like $P_n$. 
Thus we consider the space-averaged probability measure on $\O$
\begin{equation}\label{average}
\bar P_{n}:=\frac{1}{|\L_n|} \sum_{i\in\Ln}  P_n^{\text{\rm per}} \circ\theta^{-1}_i \quad \in \Prob_s(\O).
\end{equation}
$\bar P_n$ is shift invariant by construction. 
It can be interpreted as the shift invariant extension of the solution of (\ref{eq:FDSDE}) on $\Ln$. \\

We now show that the sequence $(\bar P_n)_n $ has an accumulation point for the $\mathcal{L}$-topology 
of local convergence on $\Prob(\O)$. This topology is defined as the coarsest one 
such that the maps $P\mapsto P(A)$, from $\Prob(\O)$ to $\R$, are continuous for
any cylinder $A \in \F$.
The key argument is the following tightness criterium based on the specific entropy $\Is$ and proved in  \cite{Geo2011}, 
Proposition 15.14.
% page 315 )adapted to our notations.

\begin{Proposition}\label{tension} For any constant $M>0$, the level set 
$$
\{P\in\Prob_s(\O),\;  \Is(P)\le M\}
$$ 
is sequentially compact for the $\mathcal{L}$-topology.
\end{Proposition}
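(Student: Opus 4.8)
The plan is to prove Proposition \ref{tension} by the standard two-step scheme for sequential compactness in the topology of local convergence: first establish relative compactness of the level set, then establish closedness. For the relative compactness, the key classical fact is that the topology $\mathcal{L}$ on $\Prob_s(\O)$ restricted to a set of shift-invariant measures whose single-site (or single-block) marginals form a tight family on $\C$ is itself relatively compact; here the single-site marginal $P_0$ of any $P$ with $\Is(P)\le M$ is absolutely continuous with respect to the reference Wiener-type measure $\int_\R W^z m(dz)$ with an entropy bound, and a finite relative entropy bound against a fixed probability measure on a Polish space forces tightness of the family $\{P_0\}$ by the classical Csisz\'ar / Donsker--Varadhan argument (the level sets of $\mu\mapsto \I(\mu;\sigma)$ are compact). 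Extending this from single sites to arbitrary finite cylinder sets uses superadditivity of $\L\mapsto \I_\L(P;\P)$ together with the bound $\frac1{|\L|}\I_\L(P;\P)\ge \Is(P)$ only in one direction, so one localizes: for each fixed finite $\L$ the quantity $\I_\L(P;\P)$ is controlled (by superadditivity it is at most $|\L|\,\Is(P)\le |\L|M$ along the cubic exhaustion, hence for all $\L$ up to a bounded correction), which gives tightness of $\{P_\L : \Is(P)\le M\}$ on $\C^\L$ for each $\L$, hence of the whole family for $\mathcal{L}$.

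Next I would prove that the level set is $\mathcal{L}$-closed, equivalently that $\Is$ is lower semicontinuous for the $\mathcal{L}$-topology on $\Prob_s(\O)$. This is where the genuine content lies. The finite-volume relative entropy $P\mapsto \I_\L(P;\P)$ is lower semicontinuous with respect to weak convergence of the $\L$-marginals, hence with respect to $\mathcal{L}$-convergence, because relative entropy has the variational (Donsker--Varadhan) representation $\I_\L(\mu;\nu)=\sup_{g}\big(\int g\,d\mu_\L-\ln\int e^{g}\,d\nu_\L\big)$ over bounded continuous $g$, a supremum of $\mathcal{L}$-continuous functionals. Then $\Is(P)=\sup_{n}\frac1{|\L_n|}\I_{\L_n}(P;\P)$ — the limit in \eqref{Sentropy} is actually a supremum over the cubic exhaustion by Fekete's lemma applied to the superadditive sequence $n\mapsto \I_{\L_n}(P;\P)$, a standard consequence of shift invariance and the chain rule for relative entropy — so $\Is$ is a supremum of $\mathcal{L}$-lower-semicontinuous functionals and is therefore itself $\mathcal{L}$-lower semicontinuous. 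Consequently $\{\Is\le M\}$ is $\mathcal{L}$-closed.

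Combining the two steps: any sequence $(P_k)_k$ in $\{P\in\Prob_s(\O):\Is(P)\le M\}$ has, by relative compactness, a subsequence converging in $\mathcal{L}$ to some $P\in\Prob_s(\O)$ (shift invariance passes to the limit since each $P\circ\theta_i^{-1}=P$ is an $\mathcal{L}$-closed condition), and by lower semicontinuity $\Is(P)\le \liminf_k \Is(P_k)\le M$, so the limit lies in the level set. Hence the set is sequentially compact. The main obstacle is packaging the superadditivity/Fekete argument together with the Donsker--Varadhan representation cleanly enough that both the tightness input and the lower semicontinuity come out of the same structural facts about specific entropy; since, however, this is precisely the content of Proposition 15.14 in \cite{Geo2011}, in the paper itself I would simply invoke that reference and only sketch the reduction above, as the excerpt already does.
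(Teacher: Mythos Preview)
Your proposal is correct and, as you yourself note at the end, the paper does not actually prove this proposition: it simply invokes Proposition~15.14 of \cite{Geo2011}. Your two-step sketch (tightness of finite-volume marginals via the entropy bound $\I_{\Lambda}(P;\P)\le |\Lambda|\,\Is(P)$, then $\mathcal{L}$-lower semicontinuity of $\Is$ via the Donsker--Varadhan variational formula and the representation of $\Is$ as a supremum) is exactly the standard argument behind that reference, so there is nothing to compare beyond the fact that you have unpacked what the paper leaves as a citation.

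One small slip: in your tightness paragraph you write ``the bound $\frac1{|\Lambda|}\I_\Lambda(P;\P)\ge \Is(P)$'' but immediately afterwards use the correct inequality $\I_\Lambda(P;\P)\le |\Lambda|\,\Is(P)$; the first inequality sign should be $\le$, consistent with $\Is(P)=\sup_n \frac{1}{|\Lambda_n|}\I_{\Lambda_n}(P;\P)$ coming from superadditivity. This is only a typo and does not affect the argument.
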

Therefore, we have to prove such a uniform upper bound for the sequence $(\bar P_n)_n$.

\begin{Proposition}\label{SEbounded}
The specific entropy of the sequence $ (\bar P_n)_n$ is uniformly bounded: 
$$
 \sup_{n\ge 1}  \Is(\bar P_n) < +\infty.
$$
\end{Proposition}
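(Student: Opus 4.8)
The plan is to bound the specific entropy of $\bar P_n$ by first reducing it, via sub-additivity and affinity-type properties of relative entropy, to a per-site quantity attached to the un-averaged finite-volume dynamics $P_n$, and then to compute that quantity explicitly using the Girsanov density \eqref{FVappoximation}. Concretely, I would first recall that the entropy density functional $\Is(\cdot)=\lim_{\L\nearrow\Zd}\frac1{|\L|}\I_\L(\cdot;\P)$ is affine and lower semi-continuous on $\Prob_s(\O)$, and that for the periodised-then-averaged measure one has the standard bound $\Is(\bar P_n)\le \frac1{|\Ln|}\I_{\Ln}(P_n;\P_{\Ln})$ (this is exactly the computation that makes the periodisation construction useful: the blocks $((\theta_{2kn}X)_\Ln)_k$ are i.i.d.\ under $P_n^{\mathrm{per}}$, so the entropy per site of $P_n^{\mathrm{per}}$ equals $\frac1{|\Ln|}$ times the entropy of one block, and spatial averaging only decreases entropy by convexity). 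So it suffices to show $\sup_n \frac1{|\Ln|}\I_{\Ln}(P_n;\P_{\Ln})<+\infty$.

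Next I would compute $\I_{\Ln}(P_n;\P_{\Ln})$ directly. Since $\P_{\Ln}=(W^{\otimes\Ln})\circ(\text{initial law }m^{\otimes\Ln})$ and by \eqref{FVappoximation} the density of $P_n$ with respect to $W^{\otimes\Ln}$ is $f_{\Ln}(X_{\Ln}(0))\exp(-H_{\Ln}(X_{\Ln}0_{\Ln^c}))$, the density of $P_n$ with respect to $\P_{\Ln}$ is just $\exp(-H_{\Ln}(X_{\Ln}0_{\Ln^c}))$ (the $f_{\Ln}$ factor gets absorbed, since $\P$ already carries $m^{\otimes\Zd}$ as initial law). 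Hence
\begin{equation*}
\I_{\Ln}(P_n;\P_{\Ln}) = \I_{\Ln}(\mu_{\Ln}; m^{\otimes\Ln}) - E_{P_n}\big(H_{\Ln}(X_{\Ln}0_{\Ln^c})\big).
\end{equation*}
The first term divided by $|\Ln|$ is bounded by $\Is(\mu)<+\infty$ by hypothesis $\mu\in\Prob_{s,2}(\R^\Zd)$. For the second term I expand $H_{\Ln}$ from \eqref{FVappoximation}: the $\frac12\int_0^1{\bf b}_t^2$ part contributes $-\frac12\sum_{i\in\Ln}E_{P_n}\int_0^1{\bf b}_t^2(\theta_i X)dt\le 0$, which is favourable. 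For the stochastic-integral part $\sum_{i\in\Ln}E_{P_n}\int_0^1{\bf b}_t(\theta_i X)\,dX_i(t)$, I would use that under $P_n$ the process $dX_i - {\bf b}_t(\theta_i X)dt = dB_i$ is a Brownian increment, so $E_{P_n}\int_0^1{\bf b}_t(\theta_i X)dX_i(t) = E_{P_n}\int_0^1{\bf b}_t^2(\theta_i X)dt$ (the $dB_i$ part has zero expectation). Therefore $E_{P_n}(H_{\Ln}) = -\frac12\sum_{i\in\Ln}E_{P_n}\int_0^1{\bf b}_t^2(\theta_i X)dt$, and
\begin{equation*}
\I_{\Ln}(P_n;\P_{\Ln}) = \I_{\Ln}(\mu_{\Ln};m^{\otimes\Ln}) + \tfrac12\sum_{i\in\Ln}E_{P_n}\!\int_0^1{\bf b}_t^2(\theta_i X)\,dt.
\end{equation*}

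Now the sublinearity bound \eqref{eq:driftsslineaire} gives ${\bf b}_t(\theta_i X)^2\le C(1+\sum_{j\in\Delta}X_{i+j}^*(t)^2)\le C(1+\sum_{j\in\Delta}X_{i+j}^*(1)^2)$, so
\begin{equation*}
\tfrac12\sum_{i\in\Ln}E_{P_n}\!\int_0^1{\bf b}_t^2(\theta_i X)\,dt \le \tfrac{C}{2}\Big(|\Ln| + \sum_{i\in\Ln}\sum_{j\in\Delta}E_{P_n}\big(X_{i+j}^*(1)^2\big)\Big).
\end{equation*}
To control $\sum_{i\in\Ln}E_{P_n}(X_{i+j}^*(1)^2)$ uniformly I would invoke the $\ell^2$-estimate from Lemma \ref{FEG}, or rather its per-site consequence: applying \eqref{estimee} with $\xi\equiv 0$ (and averaging over the initial law $\mu_{\Ln}$) bounds $E_{P_n}(\Vert X^*(1)\Vert_\gamma^2)$ by $K(1+E_{\mu_{\Ln}}\Vert\xi_{\Ln}(0)\Vert_\gamma^2)$; since $\mu$ has a uniform second moment $E_\mu(x_0^2)=:m_2<\infty$ and is shift invariant, $E_{\mu_{\Ln}}\Vert\xi_{\Ln}(0)\Vert_\gamma^2 = m_2\sum_{i\in\Ln}\gamma_i\le m_2\sum_{i\in\Zd}\gamma_i<\infty$, so $E_{P_n}(\Vert X^*(1)\Vert_\gamma^2)$ is bounded by a constant independent of $n$. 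By shift-invariance of $\bar P_n$ (and the structure of $P_n^{\mathrm{per}}$), the per-site quantity $\frac1{|\Ln|}\sum_{i\in\Ln}E_{P_n}(X_i^*(1)^2)$ equals $E_{\bar P_n}(X_0^*(1)^2)$; I would argue this is exactly controlled by the weighted bound because spatial averaging converts the $\gamma$-weighting into a uniform-in-site estimate: $\frac1{|\Ln|}\sum_{i\in\Ln}E_{P_n}(X_i^*(1)^2)\le \big(\inf_{i\in\Ln}\gamma_i\big)^{-1}\cdot\frac1{|\Ln|}\cdot$ (const) is too crude, so instead I would periodise first and use that under $P_n^{\mathrm{per}}$ the quantity $\sum_{i\in\Ln}E(X_i^*(1)^2)$ for the central block is finite and $n$-dependent — here is the real subtlety. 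The honest route is: $E_{\bar P_n}(X_0^*(1)^2) = \frac1{|\Ln|}\sum_{i\in\Ln}E_{P_n^{\mathrm{per}}}(X_i^*(1)^2) = \frac1{|\Ln|}\sum_{i\in\Ln}E_{P_n}(X_i^*(1)^2)$, and this average is bounded because Lemma \ref{FEG}'s proof actually yields (via the Gronwall step, before weighting) a bound on $\sum_{i\in\Ln}u_i(1)$ of order $|\Ln|$ when $\xi\equiv 0$ and the initial data has uniform second moment — one re-reads \eqref{ineguiLambda} summed without $\gamma$-weights, using $\sum_i Q_{ik}\le (5+C)|\Delta|$, to get $\sum_{i\in\Ln}u_i(1)\le e^{(5+C)|\Delta|}\big(E_{\mu_{\Ln}}\sum_{i\in\Ln}\xi_i(0)^2 + (C+2)|\Ln|\big)\le \mathrm{const}\cdot|\Ln|$. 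Dividing by $|\Ln|$ gives the uniform per-site bound, and combining everything yields $\frac1{|\Ln|}\I_{\Ln}(P_n;\P_{\Ln})\le \Is(\mu) + \frac{C}{2}(1+|\Delta|\cdot\mathrm{const})$, uniformly in $n$, which is the claim.

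The main obstacle, as flagged above, is the passage from the $\gamma$-weighted $\ell^2$-bound of Lemma \ref{FEG} (which is uniform in $\Lambda$ but degenerates site-wise, cf.\ \eqref{eq:majunifnorml2}) to a genuine \emph{uniform-per-site} second-moment bound needed to control $\frac1{|\Ln|}\sum_{i\in\Ln}E_{P_n}(X_i^*(1)^2)$. This is resolved not by using \eqref{estimee} as a black box but by revisiting its proof with flat (unweighted, finite-volume) summation and exploiting that $\xi\equiv 0$ kills the boundary term $\Vert\xi^*_{\L^c}(1)\Vert_\gamma^2$ and that $\mu$'s uniform second moment makes $E_{\mu_{\Ln}}\sum_{i\in\Ln}\xi_i(0)^2 = m_2|\Ln|$ linear in volume; Gronwall then preserves the $O(|\Ln|)$ scaling. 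Everything else — the reduction $\Is(\bar P_n)\le\frac1{|\Ln|}\I_{\Ln}(P_n;\P_{\Ln})$ via convexity and the i.i.d.\ block structure, the explicit entropy computation via Girsanov, the sign of the $-\frac12\int{\bf b}^2$ term, and the cancellation $E_{P_n}\int{\bf b}\,dX_i=E_{P_n}\int{\bf b}^2\,dt$ — is routine once the per-site moment bound is in hand.
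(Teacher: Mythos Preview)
Your argument is correct and structurally identical to the paper's: both reduce $\Is(\bar P_n)$ to $\frac{1}{|\Ln|}\I(P_n;W^{\otimes\Ln})$ (the paper in fact asserts equality here, citing Georgii, Prop.~15.52), compute the latter via the Girsanov density \eqref{FVappoximation} as $\I_{\Ln}(\mu_{\Ln};m^{\otimes\Ln})+\tfrac12\sum_{i\in\Ln}E_{P_n}\!\int_0^1{\bf b}_t^2(\theta_i X)\,dt$, and then bound the second term uniformly in $n$. One cosmetic slip: your claim that ``the density of $P_n$ with respect to $\P_{\Ln}$ is just $\exp(-H_{\Ln})$'' is wrong as stated---the factor $f_{\Ln}$ is \emph{not} absorbed, since $\mu_{\Ln}\ne m^{\otimes\Ln}$---but the displayed formula you write immediately afterwards is correct, so this does no damage.

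The one substantive divergence is in how the quantity $\sum_{i\in\Ln}E_{P_n}\!\int_0^1{\bf b}_t^2(\theta_i X)\,dt$ is controlled. The paper obtains the stronger uniform-per-site bound $\sup_n\sup_{i\in\Ln}E_{P_n}\!\int_0^1{\bf b}_t^2(\theta_i X)\,dt<\infty$ by a shift trick: rewriting the expectation under the recentred law $P^{\theta_i(\xi_{\Ln}0_{\Ln^c}),\theta_i\Ln}$ places the relevant coordinates at sites $j\in\Delta$, so that \eqref{eq:majunifnorml2} applies with the \emph{fixed, finite} penalty $\sum_{j\in\Delta}\gamma_j^{-1}$; stationarity of $\mu$ keeps the shifted initial-data norm uniformly bounded. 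This uses Lemma~\ref{FEG} as a black box. Your route---summing \eqref{ineguiLambda} with flat weights over $\Ln$, using $\sum_i Q_{ik}\le(5+C)|\Delta|$ and $\xi_{\Ln^c}\equiv0$, then running Gronwall to get $\sum_{i\in\Ln}u_i(1)=O(|\Ln|)$---is equally valid but requires reopening the proof of that lemma. The paper's shift device is slicker and, because it yields a genuine per-site bound rather than an averaged one, is reused later (see \eqref{uintprop}); your unweighted Gronwall is more elementary and entirely sufficient for the present proposition.
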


\proof
First, it is straightforward that
\begin{equation}\label{ESEr}
\Is (\bar P_n)=\frac{1}{|\L_n|} \I (P_n;W^{\otimes \Ln}).
\end{equation}
(for details, see e.g. the arguments of Proposition 15.52 in \cite{Geo2011}).
%see the proof of Proposition 16.34 \cite{Geo88}. 
From (\ref{FVappoximation})
%and by computing  the right hand side of (\ref{ESEr}),
\begin{eqnarray} \label{IPno}
\I(P_n;W^{\otimes \Ln}) & = &
%\int \ln(f_\Ln) d\mu_\Ln + E_{P_n} \left(\sum_{i\in\Ln} \int_0^1 {\bf b}_t(\theta_{i} X^{o}_\Ln)dX_i(t) -\frac{1}{2} \int_0^1 {\bf b}^2(t, \theta_{i}
%X^{o}_\Ln)dt\right)\nonumber\\
\int \ln(f_\Ln) d\mu_\Ln - E_{P_n} \Big( H_\Ln (X_\Ln 0_{\Lambda_n^c}) \Big)\nonumber\\
& =& \I(\mu_\Ln;m^{\otimes \Ln}) \nonumber\\
&& + \, \sum_{i\in\Ln} E_{P_n} \left( \int_0^1 {\bf b}_t( \theta_{i} (X_\Ln 0_{\Lambda_n^c}))\Big(dX_i(t) -{\bf b}_t(
\theta_{i} (X_\Ln 0_{\Lambda_n^c}))dt\Big)\right)\nonumber\\
& & + \, \frac{1}{2} \, \sum_{i\in\Ln} E_{P_n} \left( \int_0^1 {\bf b}^2_t(\theta_{i} (X_\Ln 0_{\Lambda_n^c}))dt\right).
\end{eqnarray}

Let us first prove that the expectation in the last term of the right hand 
side of (\ref{IPno}) is uniformly bounded (as a function in $i$ and $n$). 
Thanks to the inequalities \eqref{eq:driftsslineaire} and \eqref{eq:majunifnorml2}  
and to the stationarity of $\mu$
\begin{eqnarray}\label{Unifbornebis}
 ||{\bf b }||_{\infty,2}^2 &:= & \sup_{n} \sup_{i\in \L_n} E_{P_n} \Big(\int_0^1{\bf b}^2_t( \theta_i (X_{\Lambda_n} 0_{\Lambda_n^c})) \, dt \Big)
\nonumber \\
& =& \sup_{n} \sup_{i\in \L_n}  \int\int_0^1{\bf b}^2_t( \theta_i\o) \, dt P^{\xi_{\Ln} 0_{\L_n^c},\Ln}(d\omega)\, \mu_{\Ln}(d\xi_\Ln(0)) \nonumber \\
& =& \sup_{n} \sup_{i\in \L_n}  \int\int_0^1{\bf b}^2_t( \o) \, dt P^{\theta_i(\xi_{\Ln} 0_{\Ln^c}),\theta_i\Ln}(d\omega)\, \mu_{\Ln}(d\xi_\Ln(0)) \nonumber \\
& \le & C+CK\left(1+\sup_{n} \sup_{i\in \L_n} \int_{\R^\Ln} \Vert \theta_i x_{\Ln}) \Vert_\gamma^2  \, \mu_{\Ln}(dx_{\Ln})\right)\sum_{j\in\Delta} \gamma_j^{-1}\nonumber\\
&\le & C+CK \bigg(1+ (\sum_{j\in\Zd} \gamma_j) \int_{\R^\Zd} x_0^2 \, \mu(dx) \bigg)\sum_{j\in\Delta} \gamma_j^{-1} \nonumber \\
&<& +\infty.
\end{eqnarray}

\noindent Since $P_n$ is a weak solution of (\ref{eq:FDSDE}), for each $i \in \Ln$, the process 
$ t\mapsto X_i(t)-\int_0^t {\bf b}_s(\theta_{i} (X_\Ln 0_{\Lambda_n^c}))ds$
is a $P_n$-Brownian motion. Together with the finiteness of $ ||{\bf b }||_{\infty,2}$, this implies that 
$t\mapsto \int_0^t {\bf b}_s( \theta_{i}
(X_\Ln 0_{\Lambda_n^c}))(dX_i(s) -{\bf b}_s(
\theta_{i} (X_\Ln 0_{\Lambda_n^c}))ds) $ 
is a $P_n$-martingale.
So the second term in the right hand side of 
(\ref{IPno}) vanishes. 
Using  the finiteness of the specific entropy of $\mu$, we obtain 
% It leads to
% {\small
% \begin{eqnarray}
%  \I(P_n;W^{\otimes \Ln}) 
% & = & \I(\mu_\Ln;m^{\otimes \Ln}) + \frac{1}{2}|\L_n| \bar {\bf b}^2,
% \end{eqnarray}
% }
 %${\bf b}$ is bounded and since
\begin{eqnarray}\label{borneUrappel}
\frac{1}{|\Ln|} \I(P_n;W^{\otimes \Ln})  \le 
\sup_{n\ge 1} \frac{1}{|\Ln|} \I(\mu_\Ln;m^{\otimes \Ln})  + \frac{1}{2} \, ||{\bf b }||_{\infty,2}^2< +\infty.
%frac{\|{\bf b}\|_\infty^2}{2}<+\infty.
\end{eqnarray}
With (\ref{ESEr}), this completes the proof of Proposition \ref{SEbounded}. 
\endproof

\vspace{12pt}

As corollary we get the 
\begin{Proposition}\label{soussuite}
There exists a subsequence $(\bar P_{n_k})_k $ of the sequence $(\bar P_n)_n $ which converges for the $\mathcal{L}$-topology 
to some $\bar P \in \Prob_s (\O) $. 
\end{Proposition}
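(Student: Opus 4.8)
The plan is simply to combine the two preceding propositions. By Proposition~\ref{SEbounded}, the constant $M := \sup_{n\ge 1}\Is(\bar P_n)$ is finite, so every $\bar P_n$ lies in the level set $\{P\in\Prob_s(\O):\Is(P)\le M\}$. By Proposition~\ref{tension}, this level set is sequentially compact for the $\mathcal{L}$-topology. Hence I would extract a subsequence $(\bar P_{n_k})_k$ that $\mathcal{L}$-converges to some limit $\bar P$ belonging to the same level set; in particular $\bar P\in\Prob_s(\O)$, and moreover $\Is(\bar P)\le M<+\infty$.

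It is worth recording, for use in the later sections, which properties of $\bar P$ survive this limit procedure. Shift invariance of $\bar P$ is automatic, since the level set appearing in Proposition~\ref{tension} is by definition contained in $\Prob_s(\O)$ (equivalently, for each fixed $i$ and each cylinder $A$ one has $\bar P_{n_k}(\theta_i^{-1}A)\to\bar P(\theta_i^{-1}A)$ and $\bar P_{n_k}(A)\to\bar P(A)$, and the two left-hand sides coincide). The bound $\Is(\bar P)\le M$ already gives the first of the two regularity statements claimed in Theorem~\ref{mainTH}, so the finite specific entropy of the candidate solution is inherited ``for free'' at this stage. The finiteness of the second moment $E_{\bar P}\big(\sup_{t\in[0,1]}X_i(t)^2\big)$ is \emph{not} obtained here; it will follow later from the uniform-in-$\Lambda$ estimate \eqref{eq:majunifnorml2} together with a lower-semicontinuity argument along the converging subsequence.

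There is essentially no obstacle in this proposition: it is an immediate corollary, all the analytic substance having been placed upstream, namely in the entropy tightness criterion (Proposition~\ref{tension}, quoted from \cite{Geo2011}) and in the uniform entropy bound (Proposition~\ref{SEbounded}). The only mild point of care is that the $\mathcal{L}$-topology tests only cylinder events and is weaker than weak convergence for the product topology, so one must be attentive about which features of the $\bar P_n$ are preserved in the limit $\bar P$ -- hence the remarks above.
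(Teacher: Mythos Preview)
Your proof is correct and matches the paper's own treatment: the proposition is stated there explicitly as an immediate corollary of Propositions~\ref{tension} and~\ref{SEbounded}, with no further argument given. Your additional remarks about which properties of $\bar P$ are inherited at this stage are accurate and anticipate exactly what the paper does in the subsequent sections.
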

From now we write for simplicity 
$\bar P = \lim_n \bar P_n$ instead of $\bar P = \lim_k \bar P_{n_k}$.\\
The rest of Section 3 is devoted to the analysis of this limit point $\bar P $.

%%%%%%%%%%%%%%%%%%%%%%%%%%%%%%%%%%%%%%%%%%%%%%%%%%%%%%%%%%%%%%%%%%%%%%%%%%%%%
\subsection{Structure of the limit point $\bar P$} \label{sec:Gibbsstructure}
%%%%%%%%%%%%%%%%%%%%%%%%%%%%%%%%%%%%%%%%%%%%%%%%%%%%%%%%%%%%%%%%%%%%%%%%%%%%%

The class of Brownian semimartingales with bounded specific entropy is closed by $\mathcal{L}$-limits, as we will see in what follows. 

%%%%%%%%%%%%%%%%%%%%%%%%%%%%%%%%%%%%%%%%%%%%%%%
\subsubsection{$\bar P$ is a Brownian semimartingale} \label{sec:PbarBdiffusion}
%%%%%%%%%%%%%%%%%%%%%%%%%%%%%%%%%%%%%%%%%%%%%%%

Recall first the following important structural result for which we give the main lines of the proof.

\begin{Lemma}\label{edsrepresentation}
Let $Q \in {\cal{P}}_s(\O)$ be a probability measure  with finite specific entropy $\Is(Q)$. Then there exists an adapted process
$(\tilde\beta_t)_{t\in[0,1]}$ in $ L^2(dt\otimes dQ)$ such that the family of processes
$$
M_i(t)= X_i(t)-X_i(0)-\int_0^t \tilde\beta_t(\theta_i X)ds, \qquad i\in\Zd, t\in[0,1],
$$
are independent  $Q$-Brownian motions.

\end{Lemma}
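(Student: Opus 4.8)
\textbf{Proof plan for Lemma \ref{edsrepresentation}.}

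The plan is to realize $Q$ as a perturbation of the reference Wiener measure $\P$ and read off the drift from the Girsanov density. First I would observe that finiteness of the specific entropy $\Is(Q)<+\infty$ forces, for every finite $\Lambda\subset\Zd$, the marginal $Q_\Lambda$ to be absolutely continuous with respect to $\P_\Lambda = (W^{\otimes})_\Lambda$ (up to the initial law): indeed $\I_\Lambda(Q;\P) \le |\Lambda|\,\Is(Q)$ by superadditivity of $\Lambda\mapsto \I_\Lambda(Q;\P)$, so each localized relative entropy is finite, hence $Q_\Lambda \ll \P_\Lambda$. Applying the finite-dimensional Girsanov/martingale representation theorem to $Q_\Lambda$ (as in the classical one-particle theory, cf. the computation leading to \eqref{FVappoximation}), there is an $\F_\Lambda$-adapted process $\beta^\Lambda = (\beta_i^\Lambda)_{i\in\Lambda}$ in $L^2(dt\otimes dQ)$ such that $t\mapsto X_i(t)-X_i(0)-\int_0^t \beta^\Lambda_{i,s}\,ds$, $i\in\Lambda$, are independent $Q$-Brownian motions, and moreover the entropy is computed as
\begin{equation*}
\I_\Lambda(Q;\P) = \I_\Lambda(Q\circ X(0)^{-1}; m^{\otimes\Lambda}) + \frac12\sum_{i\in\Lambda} E_Q\Big(\int_0^1 (\beta^\Lambda_{i,s})^2\,ds\Big).
\end{equation*}
Dividing by $|\Lambda|$ and using shift invariance of $Q$ gives the uniform bound $E_Q\big(\int_0^1 (\beta^\Lambda_{i,s})^2\,ds\big)\le 2\,\Is(Q)$ for $i$ "deep inside" $\Lambda$, which is the crucial $L^2$ control.

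Next I would pass to the limit $\Lambda\nearrow\Zd$ to produce a single process. The consistency of conditional expectations shows $\beta^\Lambda_i = E_Q[\,\beta^{\Lambda'}_i \mid \F_\Lambda\,]$ for $\Lambda\subset\Lambda'$ (both are the density-derived drifts, and the smaller one is obtained by projecting), so for fixed $i$ the family $(\beta^\Lambda_i)_\Lambda$ is a martingale bounded in $L^2(dt\otimes dQ)$; by martingale convergence it converges $dt\otimes dQ$-a.e. and in $L^2$ to an adapted limit $\tilde\beta_{i}$, with $E_Q\big(\int_0^1 \tilde\beta_{i,s}^2\,ds\big)\le 2\,\Is(Q)$. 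Shift invariance of $Q$ and of the construction gives $\tilde\beta_i(\cdot) = \tilde\beta_0(\theta_i\,\cdot)=:\tilde\beta_t(\theta_i X)$, so $(\tilde\beta_t)_{t\in[0,1]}$ is a single adapted process in $L^2(dt\otimes dQ)$. Finally, $M_i(t):=X_i(t)-X_i(0)-\int_0^t\tilde\beta_s(\theta_i X)\,ds$ is the $L^2(Q)$-limit of the finite-$\Lambda$ Brownian motions $X_i(t)-X_i(0)-\int_0^t\beta^\Lambda_{i,s}\,ds$; a limit of martingales with deterministic quadratic (co)variation $\langle M_i,M_j\rangle_t = \delta_{ij}\,t$ is again such, so by L\'evy's characterization the $(M_i)_{i\in\Zd}$ are independent $Q$-Brownian motions.

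The main obstacle I anticipate is the interchange of limits required to identify $\int_0^t\tilde\beta_s(\theta_i X)\,ds$ as the genuine compensator of $X_i$ under $Q$ — that is, showing $\int_0^t \beta^\Lambda_{i,s}\,ds \to \int_0^t\tilde\beta_{i,s}\,ds$ in a sense strong enough (e.g. in $L^1(Q)$ uniformly in $t$, via the $L^2$ convergence of the integrands plus Doob/Cauchy–Schwarz) to transfer the martingale property from the finite-volume processes to $M_i$, and to check the adaptedness is preserved in the limit. Once this $L^2$-stability of the drift is secured, the rest is bookkeeping with conditional expectations and L\'evy's theorem. (The precise finite-dimensional Girsanov step and the superadditivity of $\Lambda\mapsto\I_\Lambda(Q;\P)$ are standard; I would cite \cite{Geo2011} for the latter.)
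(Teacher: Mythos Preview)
Your approach is correct and essentially reconstructs the Föllmer--Wakolbinger machinery from scratch: finite $\Is(Q)$ forces each $\I_\Lambda(Q;\P)<\infty$, Girsanov on $\C^\Lambda$ produces the finite-volume drifts $\beta^\Lambda_i$, the tower property makes $(\beta^\Lambda_i)_\Lambda$ an $L^2$-bounded martingale in the spatial filtration, and the limit $\tilde\beta$ does the job via L\'evy's characterisation. The obstacle you flag (passing the compensator identity to the limit and handling the filtration enlargement from $\F^t_\Lambda$ to $\F^t$) is real but routine once the $L^2$-convergence of $\beta^\Lambda_i\to\tilde\beta_i$ is in hand; test against $\F^s_\Lambda$-measurable bounded functions and let $\Lambda\nearrow\Zd$. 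One small correction: your bound ``$\le 2\Is(Q)$'' for sites deep inside $\Lambda$ will in fact carry a dimensional constant (compare $\Lambda_{n/2}\subset\Lambda_n$ and use that $\|\beta^\Lambda_0\|_{L^2}$ is monotone in $\Lambda$), but finiteness is all you need.

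The paper takes a shorter, citation-based route. It invokes a McMillan-type identity $\Is(Q)=E_Q\big(\I_{\{0\}}(Q(\cdot\,|\F^-);\P)\big)$ with $\F^-=\sigma(X_i,\,i<0)$ (lexicographic past), compares this to the ``local entropy'' conditioned on $\F^0=\sigma(X_i,\,i\neq 0)$, and then appeals directly to Theorem~2.4 of F\"ollmer--Wakolbinger \cite{FW}, which delivers the adapted $L^2$ drift $\tilde\beta$ and the independent-Brownian structure in one stroke. So the paper outsources exactly the martingale-convergence step you carry out by hand. Your argument buys self-containment and makes the $L^2$ bound on $\tilde\beta$ explicit; the paper's buys brevity and a clean reduction to a known structural theorem for infinite-dimensional diffusions with finite local entropy.
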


\proof

First let us notice that the specific entropy $\Is(Q)$ admits the following representation as mean of the relative
 entropy of a conditional probability:
$$ \Is(Q)= E_Q\Big(\I_{\{0\}}\big(Q (\cdot |\F^-)|\P\big)\Big),
$$
where $\F^-:=\sigma (X_i, i<0)$ (here $<$ denotes the lexicographic order).
This result is a version
of McMillan theorem, which goes back to the work of Robinson and Ruelle \cite{rr} and 
can be proved as in \cite{DaiPra}, Proposition 4.1. 
Define now  $\F^0:= \sigma (X_i, i\neq 0)$. Since $\F^- \subset \F^0$, by Jensen inequality,
\begin{equation}\label{CompEnt}
  E_Q\Big(\I_{\{0\}}\big(Q (\cdot |\F^0)|\P\big)\Big) \le  E_Q\Big(\I_{\{0\}}\big(Q (\cdot |\F^-)|\P\big)\Big) < +\infty.
  \end{equation}
 The left hand side in (\ref{CompEnt}), also called  local entropy in \cite{FW} 
 %Definition (2.1)
, is then finite.
Thus, by \cite{FW} Theorem 2.4, there exists an adapted process
$\tilde\beta$ in  $L^2(dt\otimes dQ) $ such that
$$
M_i(t)= X_i(t)-X_i(0)-\int_0^t \tilde\beta_t(\theta_i X)ds, \quad i\in\Zd, t\in[0,1],
$$
are independent $Q$-Brownian motions. 
% We do not give the details here.  
\endproof

\vspace{10pt}
Since $\bar P$ has a finite specific entropy,  applying Lemma \ref{edsrepresentation}
we deduce that it is a Brownian semimartingale characterized by its drift $\beta$. 
The proof of Theorem \ref{mainTH} is complete provided we show that
$\beta_t(\o)={\bf b}_t(\o)$ for $dt \otimes \bar P$-almost all $t$ and $\o$, and that 
 $\bar P \circ X(0)^{-1}$ is equal to $\mu$. 
These identifications will be completed in Section \ref{sec:limit=sol}. 
The identification of the  drift requires additional  tools, which we now develop.

\subsubsection{Local structure of $\bar P$}\label{Sec:kernel}

Define, for $\xi \in \O$ and $\L \subset \Zd$, a reference probability kernel on $\O$,
\begin{equation}\label{PiLambda0}
\Pi^{0}_\L (\xi, d\o) := \otimes_{i\in \L} W^{\xi_i(0)}(d\o_i)\otimes \delta_{\xi_{\L^c}} (d\o_{\L^c}).
\end{equation}
It corresponds to a Brownian dynamics with fixed initial position inside $\L$  and frozen path outside $\L$.
Next we perturb it via the 
%Hamiltonian 
functional defined in (\ref{FVappoximation}):
%noting that
\begin{equation}\label{PiLambdaH}
\Pi^{ H}_\L (\xi, d\o) := e^{- H_\L(\o)} \, \Pi^{0}_\L (\xi, d\o).
\end{equation}
Note that $\Pi^{ H}_\L$ is a probability kernel since $e^{- H_\L(\o)}$ is a $\Pi^{0}_\L $-martingale.
 By Girsanov theory, 
$\Pi^{ H}_\L (\xi, d\o)=P^{\xi,\L}(d\o)$ that is,  
it corresponds to the weak solution of (\ref{eq:FDSDE2}) on $\L$. 
We also define a
probability kernel with a wider interaction range:
\begin{equation}\label{PiLambdaH+}
\Pi^{H,+}_\L (\xi,d\o):= \frac{1}{Z_\L(\xi)} \, e^{- H_{\L^+}(\o)} \,\Pi^{0}_\L(\xi,d\o) ,
\end{equation}
where the set $\L^+= \{ i \in \Zd: (\Delta + i) \cap \L \not = \varnothing\}$ is a $\Delta$-enlarged version 
of the set $\L$. Recall that the finite set $\Delta \subset \Zd$ contains the origin and is the
 interaction range of ${\bf b}$. 
 $Z_\L(\xi)= \int e^{- H_{\L^+}(\o)}\Pi^{0}_\L(\xi,d\o)$ is the normalising constant, 
 usually called {\it partition function} in Statistical Mechanics. 
 The family 
 $(\Pi^{H,+}_\L)_\L$ will be identified as conditional expectations of $\bar P$ with 
 respect to a decreasing sequence of $\sigma$-fields, see Remark \ref{rem:kernelmeas} below 
 and \eqref{eq:EspCond}. 
  \\
%%%%%%%
Notice that this kernel contains a stochastic integral which is  not a priori meaningful. 
Moreover, it is not trivial why  $Z_\L(\xi)$  belongs to 
$]0,+\infty[$. However, it
is the case in our framework, as we show in the next lemma.
 
 \begin{Lemma}\label{def}
The map $\xi\mapsto  \Pi^{H,+}_\L (\xi,\cdot )$ is well-defined for ${\bf W}$-almost all $\xi$. 
In particular, it is also $P$-almost surely defined for any probability measure $P$ 
which is locally absolutely continuous with respect to ${\bf W}$.
\end{Lemma}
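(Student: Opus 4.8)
The plan is to show that for $\mathbf{W}$-almost every $\xi$, all three ingredients of $\Pi^{H,+}_\L(\xi,\cdot)$ make sense: the stochastic integrals appearing in $H_{\L^+}(\o)$ under the base measure $\Pi^0_\L(\xi,\cdot)$, the fact that the resulting density $e^{-H_{\L^+}(\o)}$ is $\Pi^0_\L(\xi,\cdot)$-integrable, and the strict positivity and finiteness of $Z_\L(\xi)$. I would first fix $\L$ and work with the reference kernel $\Pi^0_\L(\xi,d\o)=\otimes_{i\in\L}W^{\xi_i(0)}(d\o_i)\otimes\delta_{\xi_{\L^c}}(d\o_{\L^c})$. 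Under this measure, inside $\L^+$ the coordinates split into those in $\L$ (Brownian motions started at $\xi_i(0)$) and those in $\L^+\setminus\L$ (frozen equal to $\xi$). Recall $H_{\L^+}(\o)=-\sum_{i\in\L^+}\big(\int_0^1 \mathbf{b}_t(\theta_i\o)\,d\o_i(t)-\tfrac12\int_0^1\mathbf{b}_t^2(\theta_i\o)\,dt\big)$. For $i\in\L^+\setminus\L$ the path $\o_i\equiv\xi_i$ is fixed, so $d\o_i(t)$ is the (deterministic, possibly only a Young/Stieltjes) differential of a fixed continuous path — here one must be slightly careful, but since $\xi\mapsto\Pi^{H,+}_\L(\xi,\cdot)$ only needs to be defined $\mathbf{W}$-a.s., I can restrict to the full-measure set of $\xi$ whose coordinates are genuine Brownian paths, and for such $\xi$ the integral $\int_0^1\mathbf{b}_t(\theta_i\o)\,d\xi_i(t)$ is a well-defined Itô/Stratonovich-type object along a fixed Brownian trajectory; more robustly, since $\mathbf{b}$ is adapted and uniformly sublinear, one rewrites these frozen contributions using the Brownian structure of $\xi$ under $\mathbf{W}$ so that they are a.s. finite.

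The main point, and the one to present carefully, is that for $i\in\L$ the stochastic integral $\int_0^1\mathbf{b}_t(\theta_i\o)\,d\o_i(t)$ is a genuine Itô integral under $\Pi^0_\L(\xi,\cdot)$: the integrand $t\mapsto\mathbf{b}_t(\theta_i\o)$ is adapted by assumption, and by the uniform sublinearity \eqref{eq:driftsslineaire} together with the finiteness of Gaussian moments of $\o_j^*(1)$ under the Wiener measure, $E_{\Pi^0_\L(\xi,\cdot)}\big(\int_0^1\mathbf{b}_t^2(\theta_i\o)\,dt\big)\le C\big(1+\sum_{j}E(\o_j^*(1)^2)\big)<\infty$ — finite for every $\xi$ (for $i\in\L$ this bound only involves Wiener moments of the $\L$-coordinates and the frozen values $\xi_j^*(1)$, $j\in\L^c$, which are finite for $\mathbf{W}$-a.e.\ $\xi$). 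Hence each such stochastic integral is a well-defined $L^2(\Pi^0_\L(\xi,\cdot))$ random variable. Then, exactly as in the proof of Lemma~\ref{FEG} and in the definition \eqref{PiLambdaH}, the sublinear growth guarantees that the exponential local martingale $t\mapsto\exp\big(\sum_{i\in\L^+}\int_0^t\mathbf{b}_s(\theta_i\o)\,d\o_i(s)-\tfrac12\int_0^t\mathbf{b}_s^2(\theta_i\o)\,ds\big)$ is a true martingale under $\Pi^0_\L(\xi,\cdot)$ (a Novikov/linear-growth argument, or the argument already invoked for $\Pi^H_\L$ in the text); note that for $i\in\L^+\setminus\L$ the corresponding term is the exponential of a fixed number, so it only rescales the density by a positive finite constant and does not affect the martingale property of the $\L$-indexed part. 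This yields $0<Z_\L(\xi)=E_{\Pi^0_\L(\xi,\cdot)}\big(e^{-H_{\L^+}(\o)}\big)<\infty$ and hence $\Pi^{H,+}_\L(\xi,\cdot)$ is a well-defined probability measure, for $\mathbf{W}$-almost every $\xi$.

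Finally, for the second sentence of the lemma, if $P$ is locally absolutely continuous with respect to $\mathbf{W}$, then in particular $P\circ X_{\L^c}^{-1}\ll\mathbf{W}\circ X_{\L^c}^{-1}$ for every finite window, so any $\mathbf{W}$-null set of configurations $\xi$ on which $\xi\mapsto\Pi^{H,+}_\L(\xi,\cdot)$ fails to be defined is also $P$-null; since the definition of $\Pi^{H,+}_\L(\xi,\cdot)$ depends on $\xi$ only through finitely many coordinates (those in $\L^+$, by locality of $\mathbf{b}$, cf.\ Remark~\ref{rem:locality}), local absolute continuity suffices and no global absolute continuity is needed. The step I expect to be the real obstacle is making rigorous the frozen-path stochastic integrals $\int_0^1\mathbf{b}_t(\theta_i\o)\,d\xi_i(t)$ for $i\in\L^+\setminus\L$ and $i$ near the boundary — i.e.\ checking that restricting to $\mathbf{W}$-typical $\xi$ (Brownian outside paths) really does make every integral and the partition function finite and positive simultaneously; everything else is a routine consequence of uniform sublinearity, Gaussian moment bounds, and Girsanov theory already used above.
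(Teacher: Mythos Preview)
Your argument contains a genuine gap at the step where you claim that for $i\in\L^+\setminus\L$ ``the corresponding term is the exponential of a fixed number, so it only rescales the density by a positive finite constant.'' This is false: by definition $\L^+=\{i\in\Zd:(i+\Delta)\cap\L\neq\varnothing\}$, so for every $i\in\L^+\setminus\L$ the integrand $\mathbf{b}_t(\theta_i\o)$ depends on at least one coordinate $\o_j$ with $j\in\L$, which is \emph{random} under $\Pi^0_\L(\xi,\cdot)$. Hence the boundary contribution $\int_0^1\mathbf{b}_t(\theta_i\o)\,d\xi_i(t)-\tfrac12\int_0^1\mathbf{b}_t^2(\theta_i\o)\,dt$ is a genuine random variable, not a constant; it is a stochastic integral with a random integrand against the \emph{fixed} path $\xi_i$, and there is no reason for $e^{-H_{\L^+}}$ to be a $\Pi^0_\L(\xi,\cdot)$-martingale. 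Your Novikov/linear-growth route to $Z_\L(\xi)<\infty$ therefore does not go through as stated.

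The paper sidesteps this by a short averaging argument: instead of bounding $Z_\L(\xi)$ for fixed $\xi$, it uses that $\int \Pi^0_\L(\xi,\cdot)\,\mathbf{W}(d\xi)=\mathbf{W}$ (since $\Pi^0_\L=\mathbf{W}(\cdot\,|\,\G_\L)$), whence $E_{\mathbf{W}}(Z_\L)=E_{\mathbf{W}}\big(e^{-H_{\L^+}}\big)=1$ by Girsanov, because under the full measure $\mathbf{W}$ \emph{all} coordinates in $\L^+$ are Brownian and the entire exponential is an honest martingale. This yields $Z_\L<\infty$ $\mathbf{W}$-a.s.\ immediately; positivity then follows once the stochastic integrals against $(\xi_i)_{i\in\L^+\setminus\L}$ are known to be $\mathbf{W}$-a.s.\ finite, so that $H_{\L^+}$ is a.s.\ a real number and its exponential strictly positive. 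Your treatment of the well-definedness of the boundary stochastic integrals and of the second sentence of the lemma (local absolute continuity, dependence on finitely many coordinates) is correct; only the finiteness of the partition function needs the averaging trick rather than a pointwise martingale argument.
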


\proof
The stochastic integrals with respect to $(\xi_i)_{i \in \L^+\smallsetminus \L }$ appearing 
in $\Pi^{H,+}_\L(\xi,.)$ are clearly  meaningful ${\bf W}$-almost
surely. Moreover, by Girsanov theorem, $E_{\bf W}(Z_\L)=1$ which ensures 
that $Z_\L$ is ${\bf W}$-a.s. finite. Since $H_\L$ is  ${\bf W}$-almost
surely finite, $Z_\L$ is ${\bf W}$-a.s. positive and the lemma is proved.
\endproof\\

%The measurability property of the kernels $\Pi^{H}_\L $ and 
%$ \Pi^{H,+}_\L $ is the subject of the following remark.

\begin{remark} \label{rem:kernelmeas}
Define, for  $\L \subset \Zd $, the  $\sigma$-field
$\G_\L = \sigma ( X_{\Lambda^c}, X(0) ).
$
It builds a decreasing family when $\Lambda$ increases and  
$
\Pi^{0}_\L = \P (\quad |\G_\L) \quad a.s..
$
Moreover, 
$\xi \mapsto \Pi^{H}_\L (\xi,\cdot )$ is $\G_{\L} \cap \F_{\L^+}=\sigma \big( X_{\Lambda^+\setminus\L}, 
X_\L(0) \big)$-measurable since
$ H_\L$ is $\F_{\L^+}$-measurable, 
and 
$\xi \mapsto \Pi^{H,+}_\L (\xi,\cdot )$ is $\partial \F_\L$-measurable,
where the boundary $\sigma$-field $\partial \F_\L$
is defined by 
$
\partial \F_\L := \G_\L \cap \F_{\L^{++}}.
$
$(\L^{++}$ is a simplified notation for $(\L^{+})^+)$.
\end{remark}

We now present an equilibrium equation - or fixed point property - 
satisfied by $\bar P$ which in fact determines its local specifications, 
and therefore induces some Gibbsian structure, as we will emphasize in Section \ref{sec:prop}. 
\begin{Lemma} \label{GibbsDyn}
For any finite subset $\L$ of $\Zd$,   
\begin{eqnarray}\label{DLRdyn}
\bar P (d\o)  &=& \int_\O \Pi^{H,+}_\L(\xi,d\o)\, \bar P(d\xi) .
\end{eqnarray}
%which is even $\partial \F_\L$-measurable.
\end{Lemma}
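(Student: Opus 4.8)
The plan is to establish the DLR-type equation \eqref{DLRdyn} as a limit of the analogous equation for the finite-volume approximations $\bar P_n$, using the convergence $\bar P_n \to \bar P$ for the $\mathcal L$-topology together with the local structure of the approximating dynamics. First I would fix the finite set $\L$ and observe that, thanks to the locality assumption on $\bf b$ (Remark \ref{rem:locality}), once $n$ is large enough that $\L^{++} \subset \Ln$, the conditional law of $X_\L$ given $\G_\L$ under $P_n$ (and hence under $\bar P_n$, after accounting for the periodisation and spatial averaging) is exactly $\Pi^{H,+}_\L(\xi,\cdot)$: indeed, the density \eqref{FVappoximation} factorises, the $\L$-part of the Hamiltonian that involves $X_\L$ is $H_{\L^+}$, and the outside configuration enters only through a bounded neighbourhood of $\L$. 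This gives, for $n$ large, $\bar P_n(d\o) = \int \Pi^{H,+}_\L(\xi,d\o)\,\bar P_n(d\xi)$ — more precisely this holds with an error coming from the boundary of the periodisation blocks, which concerns only a vanishing fraction $|\partial \Ln|/|\Ln|$ of translates and is therefore negligible after the spatial average.

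Next I would pass to the limit. Testing \eqref{DLRdyn} against a bounded cylinder function $g$ that is $\F_V$-measurable for some fixed finite $V$, the left-hand side $E_{\bar P_n}(g) \to E_{\bar P}(g)$ by definition of the $\mathcal L$-topology. For the right-hand side I would need the map $\xi \mapsto \int g(\o)\,\Pi^{H,+}_\L(\xi,d\o)$ to be (essentially) a bounded local function of $\xi$, so that its $\bar P_n$-expectation also converges to its $\bar P$-expectation; by Remark \ref{rem:kernelmeas} it is $\partial\F_\L = \G_\L \cap \F_{\L^{++}}$-measurable, hence a function of $X_{\L^{++}\setminus\L}$ and $X_\L(0)$ only, so it is local. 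The only subtlety is boundedness and continuity: the kernel contains the stochastic integral in $H_{\L^+}$ and the random normalisation $Z_\L(\xi)$, neither of which is bounded or continuous as a function of $\xi$. To handle this I would use the uniform bounds already available — the uniform $\ell^2$-moment bound \eqref{eq:majunifnorml2}, the uniform bound $\|{\bf b}\|_{\infty,2}^2 < \infty$ from \eqref{Unifbornebis}, and the uniform specific-entropy bound of Proposition \ref{SEbounded} — to get uniform integrability of $Z_\L(\xi)^{-1}$, of the stochastic integrals, and of the relevant test integrands under the family $(\bar P_n)_n$; one then approximates the (a.s. continuous, a.s. finite, by Lemma \ref{def}) integrand by bounded continuous cylinder functions and passes to the limit.

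The main obstacle will be precisely this last point: the $\mathcal L$-topology only controls expectations of bounded \emph{cylinder} functions (in fact only of indicators of cylinders), whereas the natural test functional $\xi \mapsto \int g\,\Pi^{H,+}_\L(\xi,d\o)$ is unbounded and only defined $\bf W$-a.s. (Lemma \ref{def}). Bridging this gap requires a truncation argument: cut off the stochastic integral appearing in $H_{\L^+}$ at level $R$ and $Z_\L$ away from $0$, show the truncated functional is a bounded continuous function of finitely many coordinates whose $\bar P_n$-expectations pass to the limit, and then let $R \to \infty$, controlling the error uniformly in $n$ via the moment and entropy bounds above (and using that each $\bar P_n$, like $\bar P$, is locally absolutely continuous with respect to $\bf W$, so the truncation errors are genuinely small). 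A secondary, more routine, point to check is that the periodisation in the construction of $\bar P_n^{\text{per}}$ and the spatial average \eqref{average} do not spoil the identity: the DLR equation for a fixed $\L$ holds exactly for each translate $P_n^{\text{per}}\circ\theta_i^{-1}$ as soon as $i + \L^{++}$ lies inside the corresponding periodisation block, and the fraction of indices $i \in \Ln$ for which this fails tends to $0$.
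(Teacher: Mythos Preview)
Your overall strategy matches the paper's: establish the DLR identity for $P_n$ when $\Ln \supset \L^{++}$, account for the periodisation and spatial average (only a vanishing boundary fraction of shifts fails), and then pass to the limit using $\mathcal L$-convergence and the locality of $\xi \mapsto \int g(\o)\,\Pi^{H,+}_\L(\xi,d\o)$.

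Where you diverge from the paper is in the ``main obstacle'' you identify, and this is where you create unnecessary difficulty for yourself. You claim the test functional $\xi \mapsto \int g(\o)\,\Pi^{H,+}_\L(\xi,d\o)$ is unbounded and that you must control $Z_\L(\xi)^{-1}$ and the stochastic integrals via truncation and uniform integrability. But $\Pi^{H,+}_\L(\xi,\cdot)$ is a \emph{probability} measure (the normalisation $Z_\L$ is built in precisely for this), so for bounded $g$ the functional is trivially bounded by $\|g\|_\infty$. There is nothing to truncate. Second, the $\mathcal L$-topology is defined through convergence of $P \mapsto P(A)$ for all cylinders $A$; since any bounded $\F_\Gamma$-measurable function is a uniform limit of simple functions built from such cylinders, $\mathcal L$-convergence automatically yields convergence of expectations of bounded local \emph{measurable} functions --- no continuity of the test functional is required. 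The a.s.\ definition issue you mention is real but benign: all $\bar P_n$ and $\bar P$ are locally absolutely continuous with respect to $\P$, so Lemma~\ref{def} applies, and one simply fixes a measurable version of the functional.

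In short, the paper's proof is exactly your first two paragraphs, stopping at ``it is local'', and then observing that bounded + local is already enough to pass to the limit. Your third paragraph (the truncation programme) is superfluous and, as stated, would be harder to carry out than the one-line observation it is meant to replace --- getting uniform integrability of $Z_\L^{-1}$ from the available estimates is not obvious, whereas boundedness of the whole functional is immediate.
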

\proof
First, let us note that the right term in (\ref{DLRdyn}) is meaningful. 
Indeed, since the specific entropy of $\bar P$ is finite, 
$\bar P$ is locally absolutely continuous with respect to ${\bf W}$. 
Therefore, by Lemma \ref{def},
$\Pi^{H,+}_\L(\xi,.)$ is well defined for $\bar P$-almost all $\xi$.\\
We have to  prove that 
\begin{equation*}\label{DLRdyn3}
\int g(\o)\bar P(d\o)=\int g(\o) \Pi^{H,+}_\L(\xi,d\o) \, \bar P(d\xi)
\end{equation*}
holds for
%any bounded set $\L$ and 
any bounded local measurable function $g$ ($g(\o) =g(\o_{\tilde\L}) $ for some finite  $\tilde\L \subset \Zd$). 
Denote by $\Gamma$ a finite set of $\Zd$ which includes both $\tilde\L$ and $\L^{++}$. 
Using standard conditional calculus
(see e.g. Lemma 1 and 2 in \cite{dprz}),
it is simple to  show that for $n$ large enough assuring that 
$ \L_n \supset \Gamma$,
 the probability measure $P_n$ satisfies
$$
\int g(\o)P_n(d\o)=\int g(\o) \Pi^{H,+}_\L(\xi,d\o) P_n(d\xi),
$$ 
which implies $P_n (\cdot \, |\G_\L) =  \Pi^{H,+}_\L \quad a.s.$.
\\
Noting that  $\xi \mapsto \int g(\o) \Pi^{H,+}_\L (\xi,d\o )$ is local 
%and it is invariant under the translation $(\theta_i)$
we have
\begin{eqnarray*}
\int g(\o)\bar P(d\o) & = &\lim_{n} \frac{1}{|\L_n|} \sum_{i\in\Ln} \int g(\o) P_n^{\text{\rm per}}\circ \theta_i^{-1}(d\o) \\
&  =&  \lim_{n} \frac{1}{|\L_n|} \sum_{i\in\Ln, \theta_{i}\Gamma\subset \Ln } \int g(\theta_i \o) P_n(d\o) \\
& =&  \lim_{n} \frac{1}{|\L_n|} \sum_{i\in\Ln, \theta_{i}\Gamma\subset \Ln } \int g(\theta_i\o)\Pi^{H,+}_\L(\xi,d\o) P_n(d\xi)\\
& =&  \lim_{n} \frac{1}{|\L_n|} \sum_{i\in\Ln} \int g(\o)\Pi^{H,+}_\L(\xi,d\o) P_n^{\text{\rm per}}\circ \theta_i^{-1}(d\xi)\\
& = & \int g(\o) \Pi^{H,+}_\L(\xi,d\o) \, \bar P(d\xi) ,
 \end{eqnarray*}
 which is the expected identity.
\endproof\\
\\
We interpret the identity  \eqref{DLRdyn} as follows:
Randomizing under $\bar P$ the boundary condition $\xi$ of the  kernel $\Pi^{H,+}_\L(\xi,\cdot)$
leads back to $\bar P$. It implies in particular that
\begin{equation} \label{eq:EspCond}
 \bar P (\cdot \, |\G_\L) =  \Pi^{H,+}_\L \quad a.s.
\end{equation}
which means that the limiting procedure in $n$ and the conditioning with respect to $\G_\L$ 
are two transformations which commute.

%%%%%%%%%%%%%%%%%%%%%%%%%%%%%%%%%%%%%%%%%%%%%%%%%%%%%%%%%%%%%%%%%%%%%%%%%%%
\subsection{ $\bar P$  minimizes the free energy functional.} \label{sec:minenergy}
%%%%%%%%%%%%%%%%%%%%%%%%%%%%%%%%%%%%%%%%%%%%%%%%%%%%%%%%%%%%%%%%%%%%%%%%%%%

For any probability measure  $Q\in{\cal{P}}_s(\O)$ with finite specific entropy, we define the $Q$-mixtures of the 
kernels $\Pi^{H}_\L$ and $\Pi^{H,+}_\L$ by:
%$$ \Pi^{0}_{\L,Q}(d\o)=\int \Pi^{0}_\L(\xi,d\o)\, Q(d\xi),$$
$$ \Pi^{H}_{\L,Q}(d\o)=\int_\O \Pi^{H}_\L(\xi,d\o) \,Q(d\xi),\quad 
 \Pi^{H,+}_{\L,Q}(d\o):=\int_\O \Pi^{H,+}_\L(\xi,d\o)\, Q(d\xi).
 $$
With these notations, the equilibrium equation (\ref{DLRdyn}) reads 
as follows: \\
$\bar P$ is a fixed point of the map $ Q \mapsto  \Pi^{H,+}_{\L,Q} \, .$\\
Moreover, if we assume that ${\bf b} \in L^2(dt\otimes dQ)$, i.e. 

\begin{equation}\label{squaredrift}
E_Q \left(\int_0^1 {\bf b}^2_t(X)dt\right)< +\infty,
\end{equation}
we can  define $\Is^{\bf b} (Q)$, the so-called free energy of $Q$, as
the difference between its specific entropy and its specific 
energy, namely 
$$
\Is^{\bf b} (Q) := \Is(Q)- \Is(Q\circ X(0)^{-1}) - E_Q \Big( \int_0^1 {\bf b}_t (X)dX_0(t) -
\frac{1}{2} \int_0^1 {\bf b}^2_t (X)dt \Big) .
$$
Notice that $
\Is^{\bf b} (Q)$ is well defined although a stochastic integral term  occurs. 
Since $Q$ has a finite specific entropy, by Lemma \ref{edsrepresentation}, we have that 
$E_Q ( \int_0^1 {\bf b}_t(X)dX_0(t))$ is nothing but 
$E_Q (\int_0^1 {\bf b}_t(X)\beta_t(X))dt)$ which is finite because 
$\beta$ and ${\bf b}$ are  in  $L^2(dt\otimes dQ)$.

Note also that, 
%by computations as in \eqref{Unifbornebis} and 
by the convergence of $(\bar P_n)_n$ to
$\bar P$  for the local topology, following the computations done 
in \eqref{Unifbornebis}, we obtain that
\begin{equation}\label{bornesup}
%E_{\bar P}\left({X_0^*}^2\right)<+\infty, \;
E_{\bar P}(\Vert X^*(1) \Vert^2_\gamma)<+\infty \quad \text{and} \quad E_{\bar P} \left(\int_0^1 {\bf
b}^2_t(X)dt\right)< +\infty.
\end{equation}
Therefore the free energy of $\bar P$  is well defined.

In the proposition below we show that $\Is^{\bf b} $ is a thermodynamical functional,
in the sense that it can be also obtained as limit of rescaled
finite-volume relative entropies.

%Now we define the sequence of probability measures on $\O$ by
%\begin{equation}
%\Pi^{H}_{n,Q} := \Pi^{H}_{\Ln,Q}.
%\end{equation}

\begin{Proposition}\label{energielibre} 
Consider $Q \in \Prob_s(\O)$ with finite specific entropy  and satisfying  \eqref{squaredrift}. Then
\begin{equation}\label{energielibrebis}
\Is^{\bf b} (Q) = \lim_n \frac{1}{|\Ln|} \, \I_{\L_n^+}(Q;\Pi^{H}_{\Ln,Q} ).
\end{equation}
\end{Proposition}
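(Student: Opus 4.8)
The plan is to compute the finite-volume relative entropy $\I_{\L_n^+}(Q;\Pi^{H}_{\Ln,Q})$ explicitly via an absolute-continuity (Girsanov) computation and then to identify each piece in the limit as one of the three terms defining $\Is^{\bf b}(Q)$. First I would write down the density. By definition $\Pi^{H}_{\Ln,Q}(d\o)=\int \Pi^H_{\Ln}(\xi,d\o)\,Q(d\xi)$, and since $\Pi^H_{\Ln}(\xi,d\o)=e^{-H_{\Ln}(\o)}\,\otimes_{i\in\Ln}W^{\xi_i(0)}(d\o_i)\otimes\delta_{\xi_{\Ln^c}}(d\o_{\Ln^c})$, the mixture over $Q$ only averages the initial condition inside $\Ln$ and the frozen path outside. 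Restricted to $\F_{\Ln^+}$, one gets
$$
\frac{d\, \Pi^H_{\Ln,Q}\big|_{\F_{\Ln^+}}}{d(\,W^{\otimes\Ln}\otimes Q_{\Ln^+\setminus\Ln}\,)}(\o)
= f^{Q}_{\Ln}(\o_\Ln(0))\,e^{-H_{\Ln}(\o)},
$$
where $f^Q_\Ln$ is the density of $Q\circ X_\Ln(0)^{-1}$ with respect to $m^{\otimes\Ln}$ (finite since $\Is(Q)<+\infty$ forces local absolute continuity of the time-0 marginal). Then, using the chain rule for relative entropy along $\F_{\Ln^+}=\F_{\Ln}\vee \F_{\Ln^+\setminus\Ln}$ together with the fact that $Q$ is locally absolutely continuous with respect to ${\bf W}$, one obtains
$$
\I_{\L_n^+}(Q;\Pi^{H}_{\Ln,Q}) = \I_{\Ln^+}(Q;{\bf W}) - \I_{\Ln^+\setminus\Ln}(Q;{\bf W}) - \I_{\Ln}(Q\circ X(0)^{-1};m^{\otimes\Ln}) + E_Q\big(H_\Ln(X)\big),
$$
or a version organised so that the logarithm of the density of $Q$ against ${\bf W}$ cancels between $\F_{\Ln^+}$ and $\F_{\Ln^+\setminus\Ln}$ up to boundary terms. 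The key algebraic input is that $H_\Ln$ is $\F_{\Ln^+}$-measurable, so $E_Q(H_\Ln)$ is unambiguous.

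Next I would divide by $|\Ln|$ and pass to the limit term by term. The ratio $|\Ln^+|/|\Ln|\to 1$ and $|\Ln^+\setminus\Ln|/|\Ln|\to 0$ because $\Delta$ is finite and $\Ln$ is a van Hove (cubic) sequence, so $\tfrac{1}{|\Ln|}\I_{\Ln^+}(Q;{\bf W})\to \Is(Q)$, $\tfrac{1}{|\Ln|}\I_{\Ln^+\setminus\Ln}(Q;{\bf W})\to 0$, and $\tfrac{1}{|\Ln|}\I_\Ln(Q\circ X(0)^{-1};m^{\otimes\Ln})\to \Is(Q\circ X(0)^{-1})$. For the energy term, expand $H_\Ln(X)=-\sum_{i\in\Ln}\big(\int_0^1{\bf b}_t(\theta_iX)dX_i(t)-\tfrac12\int_0^1{\bf b}_t^2(\theta_iX)dt\big)$; by the shift invariance of $Q$ each summand has the same $Q$-expectation, equal to $-E_Q\big(\int_0^1{\bf b}_t(X)dX_0(t)-\tfrac12\int_0^1{\bf b}_t^2(X)dt\big)$, which is finite by \eqref{squaredrift} and the $L^2$-representation of the drift of $Q$ from Lemma \ref{edsrepresentation}. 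Hence $\tfrac{1}{|\Ln|}E_Q(H_\Ln(X))\to -E_Q\big(\int_0^1{\bf b}_t(X)dX_0(t)-\tfrac12\int_0^1{\bf b}_t^2(X)dt\big)$, and summing the four limits gives exactly $\Is^{\bf b}(Q)$.

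The main obstacle I expect is the careful justification of the chain-rule decomposition of $\I_{\Ln^+}(Q;\Pi^H_{\Ln,Q})$: one must check that all the densities involved are genuinely defined (the time-0 marginal density $f^Q_\Ln$, the Girsanov exponential $e^{-H_\Ln}$, and the conditional density of $Q$ on the boundary annulus $\Ln^+\setminus\Ln$ against Wiener measure), that the stochastic-integral term in $H_\Ln$ is $Q$-integrable rather than merely finite almost surely, and that no term is silently $+\infty$. The $Q$-integrability of $\int_0^1{\bf b}_t(X)dX_0(t)$ is precisely handled by the remark already made in the text (it equals $E_Q(\int_0^1{\bf b}_t(X)\beta_t(X)dt)$ with $\beta,{\bf b}\in L^2(dt\otimes dQ)$), so the crux is the measure-theoretic bookkeeping of the decomposition, after which the passage to the limit is routine given the van Hove property of $(\Ln)_n$ and the stationarity of $Q$.
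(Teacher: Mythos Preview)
Your proposal is correct and follows essentially the same approach as the paper: both decompose $\I_{\Ln^+}(Q;\Pi^{H}_{\Ln,Q})$ into the four pieces $\I_{\Ln^+}(Q;\P) - \I_{\Ln}(Q\circ X(0)^{-1};m^{\otimes\Ln}) - \I_{\Ln^+\setminus\Ln}(Q;\P) + E_Q(H_{\Ln})$ and pass to the limit term by term using the van Hove property of $(\Ln)_n$, the stationarity of $Q$, and the $L^2$-drift representation from Lemma~\ref{edsrepresentation}. The only cosmetic difference is that the paper organises the computation as a telescoping cascade of four density ratios rather than via a single density against $W^{\otimes\Ln}\otimes Q_{\Ln^+\setminus\Ln}$.
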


\proof
By definition of the relative entropy we have
\begin{eqnarray}\label{cascade}
\I_{\L_n^+}(Q;\Pi^H_{\Ln,Q} )
&= & E_Q \Big(\ln \big( \frac{dQ}{d\Pi^H_{\Ln,Q}}\big|_{\L_n^+}\big) \Big)\nonumber\\
& =& E_Q \bigg(\ln \frac{dQ_{\L_n^+}}{dW^{\otimes\L_n^+}}    
+ \ln \frac{dW^{\otimes \L_n^+}}{d(\int \otimes_{i\in \Ln} W^{\xi_i(0)}Q(d\xi)\otimes W^{\otimes
\L_n^+\backslash \Ln})} \nonumber\\
&  &   
+ \ln \frac{d\left(\int \otimes_{i\in \Ln} W^{\xi_i(0)}Q(d\xi)\otimes W^{\otimes \L_n^+\backslash \Ln}\right)}{d\Pi^{0}_{\Ln,Q}\big|_{\L_n^+} }   
+ \ln \frac{d\Pi^{0}_{\Ln,Q}}{d\Pi^H_{\Ln,Q}}\bigg|_{\L_n^+}  
 \bigg)\nonumber\\
 &=& \I_{\L_n^+}(Q;\P)-\I_{\L_n}(Q\circ X(0)^{-1};m^{\otimes \Zd})-\I_{\L_n^+\backslash \Ln}(Q;\P)\nonumber\\
 && +E_Q(H_\Ln)
\end{eqnarray}
The normalised third term of (\ref{cascade}) vanishes: 
by subadditivity of the relative entropy (see Proposition 15.10 in \cite{Geo2011}), 
$$ 
0\le \I_{\L_n^+\backslash \Ln}(Q;\P) \le \I_{\L_n^+}(Q;\P)- \I_{\L_n}(Q;\P)
$$
and since $\lim_n |\L_n|/|\L_n^+| =1 $ it follows that
\begin{equation}\label{entropybord}
\lim_n \frac{1}{|\Ln|} \I_{\L_n^+\backslash \Ln}(Q;\P)=0.
\end{equation}
Let us compute the fourth term of (\ref{cascade}). By stationarity of $Q$ and by the definition of $H_\Ln$, we get
\begin{equation}\label{EE}
E_Q( H_\Ln)=-|\Ln| \, E_Q \Big( \int_0^1 {\bf b}_t(X)dX_0(t) -
\frac{1}{2} \int_0^1 {\bf b}^2_t(X)dt \Big).
\end{equation}
From (\ref{entropybord}), (\ref{EE}) inserted in (\ref{cascade}) we obtain
\begin{eqnarray*}
&&\lim_n \frac{1}{|\Ln|} \, \I_{\L_n^+}(Q;\Pi^{H}_{\Ln,Q} )= \\
&& \qquad  \Is(Q)- \Is(Q\circ X(0)^{-1}) - E_Q \Big( \int_0^1 {\bf b}_t(X)dX_0(t) -
\frac{1}{2} \int_0^1 {\bf b}^2_t(X)dt \Big).
\end{eqnarray*}
\endproof

Now we are ready for proving that the free energy vanishes under $\bar P$.

\begin{Proposition}\label{BarPminimizer}
The probability measure $\bar P$ is a zero of the free energy:
$$ 
\Is^{\bf b}(\bar P)=0.
$$
\end{Proposition}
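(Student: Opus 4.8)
The plan is to show that the free energy $\Is^{\bf b}(\bar P)$ is both nonnegative and nonpositive, hence zero. For the lower bound, I would use the variational interpretation coming from Proposition \ref{energielibre}: since $\bar P$ has finite specific entropy (Proposition \ref{SEbounded} combined with the lower semicontinuity of $\Is$ under $\mathcal{L}$-limits) and satisfies \eqref{squaredrift} by \eqref{bornesup}, Proposition \ref{energielibre} gives
$$
\Is^{\bf b}(\bar P)=\lim_n \frac{1}{|\Ln|}\,\I_{\L_n^+}(\bar P;\Pi^{H}_{\Ln,\bar P}).
$$
Each term $\I_{\L_n^+}(\bar P;\Pi^{H}_{\Ln,\bar P})$ is a relative entropy, hence nonnegative, so $\Is^{\bf b}(\bar P)\ge 0$.

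For the upper bound, the key is the equilibrium equation \eqref{DLRdyn}, i.e.\ $\bar P$ is a fixed point of $Q\mapsto \Pi^{H,+}_{\L,Q}$. First I would pass from the kernel $\Pi^{H,+}_\L$ to $\Pi^{H}_\L$: by definition \eqref{PiLambdaH+}, $\Pi^{H,+}_\L(\xi,d\o)=\frac{1}{Z_\L(\xi)}e^{-H_{\L^+}(\o)}\Pi^0_\L(\xi,d\o)$, while $\Pi^H_\L(\xi,d\o)=e^{-H_\L(\o)}\Pi^0_\L(\xi,d\o)$, and the difference $H_{\L^+}-H_\L$ involves only the ``boundary sites'' $i\in\L^+\setminus\L$, whose drift terms depend on coordinates in $\L$ only through stationary, locally controlled quantities. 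Using the fixed-point identity together with the chain rule for relative entropy, one obtains for each $n$ (with $\L=\Ln$)
$$
0\le \I_{\L_n^+}\big(\bar P;\Pi^{H,+}_{\Ln,\bar P}\big)
=\I_{\L_n^+}\big(\bar P;\Pi^{H}_{\Ln,\bar P}\big)+E_{\bar P}\Big(H_{\Ln^+}-H_{\Ln}-\ln Z_{\Ln}\Big)\,,
$$
or more precisely a telescoping in which the ``wrong volume'' $\L_n^+$ versus $\L_n$ discrepancy, the boundary Hamiltonian $H_{\L_n^+}-H_{\L_n}$, and $\ln E_{\bf W}(Z_{\Ln})=0$ all contribute only $o(|\Ln|)$. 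Since $\bar P=\Pi^{H,+}_{\Ln,\bar P}$ by \eqref{DLRdyn}, the left side is exactly $\I_{\L_n^+}(\bar P;\bar P)=0$, which forces $\frac{1}{|\Ln|}\I_{\L_n^+}(\bar P;\Pi^H_{\Ln,\bar P})\to$ a nonpositive limit, hence $\Is^{\bf b}(\bar P)\le 0$ after invoking Proposition \ref{energielibre} again.

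Concretely, the argument I would write combines the two steps: start from the nonnegative quantity $\I_{\L_n^+}(\bar P;\Pi^{H,+}_{\Ln,\bar P})$, expand it as in the ``cascade'' computation \eqref{cascade} but now for the $+$-kernel, identify it with $\I_{\L_n^+}(\bar P;\Pi^H_{\Ln,\bar P})$ up to a boundary correction and $E_{\bar P}(\ln Z_{\Ln})$, and then use that \eqref{DLRdyn} makes $\I_{\L_n^+}(\bar P;\Pi^{H,+}_{\Ln,\bar P})$ vanish (it is the relative entropy of $\bar P$ with respect to a probability measure whose $\L_n^+$-marginal coincides with that of $\bar P$, because randomizing the boundary under $\bar P$ returns $\bar P$). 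Dividing by $|\Ln|$, letting $n\to\infty$, using \eqref{entropybord}-type estimates to kill the boundary terms and the uniform $L^2$-control \eqref{Unifbornebis}-\eqref{bornesup} of the drift to control $E_{\bar P}(\ln Z_{\Ln})/|\Ln|$, Proposition \ref{energielibre} yields $\Is^{\bf b}(\bar P)\le 0$.

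The main obstacle is the upper bound, specifically controlling the normalizing constants: one must show $\frac{1}{|\Ln|}E_{\bar P}(\ln Z_{\Ln})\to 0$ (or more precisely that it matches the boundary-energy term so that only $o(|\Ln|)$ survives). Since $Z_{\Ln}(\xi)=\int e^{-H_{\Ln^+}(\o)}\Pi^0_{\Ln}(\xi,d\o)$ and $H_{\Ln^+}-H_{\Ln}$ is a sum over the $|\Ln^+\setminus\Ln|=o(|\Ln|)$ boundary sites of stochastic-integral terms, Jensen's inequality gives $\ln Z_{\Ln}\le $ an integral of $-H_{\Ln^+}$ under $\Pi^0_{\Ln}$, which is integrable against $\bar P(d\xi)$ thanks to \eqref{bornesup} and the sublinear growth \eqref{eq:driftsslineaire}; the surface-to-volume ratio then does the rest. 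One must be careful that $\Is$ is lower semicontinuous (so $\Is(\bar P)<\infty$ really holds and the free energy is well-defined under $\bar P$), that the martingale property killing the stochastic-integral term in the energy survives the $\mathcal L$-limit, and that all the expectations are finite; these are exactly the points for which \eqref{bornesup}, Lemma \ref{edsrepresentation}, and Lemma \ref{def} were prepared.
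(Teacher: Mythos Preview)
Your overall architecture is exactly the paper's: nonnegativity of $\Is^{\bf b}(\bar P)$ from Proposition~\ref{energielibre}, and nonpositivity from the equilibrium equation \eqref{DLRdyn} by expressing $\I_{\L_n^{++}}(\bar P;\Pi^{H}_{\Ln,\bar P})$ as a boundary energy minus $E_{\bar P}(\ln Z_{\Ln})$. The decomposition you write is the paper's computation \eqref{calcul}, up to a sign slip (the correct identity has $+\ln Z_{\Ln}$, giving $\I(\bar P;\Pi^{H}_{\Ln,\bar P})=E_{\bar P}(H_{\Ln}-H_{\Ln^+})-E_{\bar P}(\ln Z_{\Ln})$), and one should work on $\L_n^{++}$ rather than $\L_n^+$ since $H_{\Ln^+}$ and $Z_{\Ln}$ are only $\F_{\Ln^{++}}$-measurable.

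The genuine gap is your treatment of $E_{\bar P}(\ln Z_{\Ln})$. What is needed is a \emph{lower} bound, namely $\liminf_n |\Ln|^{-1}E_{\bar P}(\ln Z_{\Ln})\ge 0$; an upper bound is useless for showing $\Is^{\bf b}(\bar P)\le 0$. Moreover, Jensen for the concave logarithm yields $\ln Z_{\Ln}\ge E_{\Pi^0_{\Ln}}[-H_{\Ln^+}]$, not $\le$; and this naive bound involves the \emph{full} Hamiltonian $H_{\Ln^+}$, a volume term of order $|\Ln|$, so ``surface-to-volume'' does not apply. The paper's key trick, which you are missing, is to change the reference measure before applying Jensen: since $e^{-H_{\Ln}}$ is the density of $\Pi^H_{\Ln}(\xi,\cdot)$ with respect to $\Pi^0_{\Ln}(\xi,\cdot)$, one has
\[
Z_{\Ln}(\xi)=\int e^{-H_{\Ln^+}}\,d\Pi^0_{\Ln}(\xi,\cdot)=\int e^{H_{\Ln}-H_{\Ln^+}}\,d\Pi^H_{\Ln}(\xi,\cdot),
\]
and Jensen now gives $\ln Z_{\Ln}(\xi)\ge \int (H_{\Ln}-H_{\Ln^+})\,d\Pi^H_{\Ln}(\xi,\cdot)$, which \emph{is} a sum over the $|\Ln^+\setminus\Ln|$ boundary sites. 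Integrating against $\bar P(d\xi)$ produces terms of the form $\int\!\!\int_0^1 {\bf b}_t(\theta_i\o)\,d\xi_i(t)\,\Pi^H_{\Ln}(\xi,d\o)\bar P(d\xi)$ for $i\in\Ln^+\setminus\Ln$; these are handled via the semimartingale decomposition of $\xi$ under $\bar P$ (Lemma~\ref{edsrepresentation}), the vanishing of the martingale part, and Cauchy--Schwarz together with \eqref{eq:majunifnorml2} and \eqref{bornesup}. Only after this change of measure does the surface-to-volume argument go through.
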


\proof
The representation  (\ref{energielibrebis}) implies that the free energy $\Is^{\bf b}$ is non negative. 
So the proof of Proposition \ref{BarPminimizer} is
complete as soon as we can show that $\Is^{\bf b}(\bar P)\le 0$.\\
Since
%, by definition of $\Pi^{ H}_{\Ln,\bar P}$,
$\bar P$ is absolutely continuous with respect to
$\Pi^{ H}_{\Ln,\bar P}$ with a  $\F_{\L_n^+}$-measurable density (see Remark \ref{rem:kernelmeas}), for any  
finite set $\Gamma$ containing $\L_n^+$, $\I_{\Gamma}(\bar P;\Pi^{ H}_{\Ln,\bar P} )$ 
and $ \I_{\L_n^+}(\bar P;\Pi^{ H}_{\Ln,\bar P}
)$ are identical. 
Taking in particular $\Gamma=\L_n^{++}$, one obtains
\begin{equation}\label{++}
\Is^{\bf b} (\bar P) = \lim_n \frac{1}{|\Ln|} \, \I_{\L_n^{++}}(\bar P;\Pi^{ H}_{\Ln,\bar P} ).
\end{equation}
Thanks to Lemma \ref{GibbsDyn}
\begin{eqnarray}\label{calcul}
\I_{\L_n^{++}}(\bar P;\Pi^{H}_{\Ln,\bar P} ) & =& 
E_{\bar P}\Big(\ln \frac{d\bar P}{d\Pi^{0}_{\Ln,\bar P}}\bigg|_{\L_n^{++}}
+ \ln \frac{d\Pi^{0}_{\Ln,\bar P}}{d\Pi^{H}_{\Ln,\bar P}}\bigg|_{\L_n^{++}}\Big)\nonumber\\
 & =& 
E_{\bar P}\Big(\ln \frac{d\Pi^{H,+}_{\Ln,\bar P}}{d\Pi^{0}_{\Ln,\bar P}}\bigg|_{\L_n^{++}}
+ \ln \frac{d\Pi^{0}_{\Ln,\bar P}}{d\Pi^{H}_{\Ln,\bar P}}\bigg|_{\L_n^{++}}\Big)\nonumber\\
& =&  -E_{\bar P}(H_{\L_n^+}) - E_{\bar P}(\ln(Z_{\Ln})) + E_{\bar P}(H_{\L_n})\nonumber\\
& = & |\L_n^+\backslash \L_n| \,  E_{\bar P} \Big( \int_0^1 {\bf b}_t(X)dX_0(t) -
\frac{1}{2} \int_0^1 {\bf b}^2_t(X)dt \Big)\nonumber\\
& &  - E_{\bar P}(\ln(Z_{\Ln})).
\end{eqnarray}
By (\ref{++}) and (\ref{calcul}) the proof of Proposition \ref{BarPminimizer} is completed provided that we show that
\begin{equation}\label{pressionpositive}
 \lim_n   \frac{E_{\bar P}(\ln(Z_{\Ln}))}{|\Ln|}\ge 0.
 \end{equation}
 Indeed we have
 {\small 
 \begin{eqnarray*}
  E_{\bar P}(\ln(Z_{\Ln})) & = & \int \ln \left( \int e^{-H_{\Lambda_n^+}(\o_\Ln \xi_{\Lambda_n^c})} \otimes_{i\in\Ln} W^{\xi_i(0)}(d\o) \right) \bar P(d\xi)\\
  &= & \int \ln \left( \int  e^{(H_\Ln-H_{\Lambda_n^+})(\o_\Ln \xi_{\Lambda_n^c})} e^{-H_{\Ln}(\o_\Ln \xi_{\Lambda_n^c})}
  \otimes_{i\in\Ln} W^{\xi_i(0)}(d\o)
\right) \bar P(d\xi)\\
  & \ge &  \int  (H_\Ln-H_{\Lambda_n^+})(\o) \, \Pi_\Ln^H(\xi,d\o) \bar P(d\xi)\\
  & =  &   
  \int \sum_{i\in \Ln^+\backslash \Ln} 
  \int_0^1 {\bf b}_t(\theta_i\o) 
  \big( d\xi_i(t) -
  \frac{1}{2}  {\bf b}_t(\theta_i\o) dt\big) 
\Pi_\Ln^H(\xi,d\o) \bar P(d\xi).
\end{eqnarray*}  
}

Recall that, for any $\xi$, the probability measure $\Pi_{\Ln}^H(\xi,.)$ defined in (\ref{PiLambdaH}) corresponds to a weak solution of (\ref{eq:FDSDE2}) 
with fixed initial condition $\xi_\L(0)$ and frozen  path outside $\xi_{\L^c}$. Therefore we deduce from the stationarity of $\bar P$ and inequalities
\eqref{eq:driftsslineaire}, (\ref{eq:majunifnorml2}) and (\ref{bornesup})

\begin{eqnarray} \label{uintprop}
& \sup_{n}  \sup_{i\notin \L_n}& \int_\O \int_0^1 {\bf b}^2_t (\theta_i\o) \, dt \, 
\Pi_{\Ln}^H(\xi,d\o) \bar P(d\xi) \nonumber\\
&&= \sup_{n}  \sup_{i\notin \L_n} \int_\O \int_0^1 {\bf b}^2_t (\o) \, dt \, 
P^{\xi,\theta_i\Ln}(d\o) \bar P(d\xi)\nonumber\\
& &\le  \sup_{n}  \sup_{i\notin \L_n} \int_\O \int_0^1  C\Big(1 + \sum_{j \in \Delta} \o_j^*(t)^2\Big) \, dt \, 
P^{\xi,\theta_i\Ln}(d\o) \bar P(d\xi)\nonumber\\
&  &\le C+CK\Big( 1  +    E_{\bar P}(\Vert X^*(1) \Vert^2_\gamma)\Big)\sum_{j\in\Delta} \gamma_j^{-1} \nonumber\\
&& <+\infty.
\end{eqnarray}

Thus \eqref{pressionpositive} holds provided that 
\begin{equation}\label{domination2}
 \inf_n \inf_{ i \in  \Lambda_n^+\backslash \Ln } 
\int \Big(\int_0^1 {\bf b}_t(\theta_i(\o))d\xi_i(t)\Big) \, \Pi_\Ln^H(\xi,d\o) \bar P(d\xi) > - \infty
\end{equation}
is proved. We use the decomposition of $\xi(t) $ under $\bar P$ as a Brownian semimartingale 
with drift $ \beta\in L^2(dt\otimes d\bar P)$, proved in Section \ref{sec:PbarBdiffusion}.
For any $\L$ and any $i\notin \L$
\begin{eqnarray*}
&&  \int \int_0^1 {\bf b}_t(\theta_i(\o))d\xi_i(t) \Pi_\L^H(\xi,d\o) \bar P(d\xi) \\
&= & \int \int_0^1 {\bf b}_t(\theta_i(\o))(d\xi_i(t)-\beta_t(\theta_i\xi)dt)\, \Pi_\L^H(\xi,d\o) \bar P(d\xi)\\
  && \qquad \quad + \int \int_0^1 {\bf b}_t(\theta_i(\o))\beta_t(\theta_i\xi)\, dt
\,   \Pi_\L^H(\xi,d\o) \bar P(d\xi)\\
 & = & \int  \int_0^1 {\bf b}_t(\theta_i(\o))\beta_t(\theta_i\xi)dt \, \Pi_\L^H(\xi,d\o) \bar P(d\xi)\\
 & \ge & - \Big(\int \int_0^1 {\bf b}^2_t (\theta_i(\o)) dt \, \Pi_\L^H(\xi,d\o) \bar P(d\xi) \Big)^{1/2}
 E_{\bar P}\Big(\int_0^1 \beta_t^2 \, dt\Big)^{1/2} > - \infty
\end{eqnarray*} 
uniformly in $\L$ and $i\notin \L$. Inequality \eqref{domination2} and \eqref{pressionpositive} are then proved.
\endproof\\
\\

Therefore the minimum of the free energy is attained on $\bar P $: 
\begin{eqnarray*}
\Is^{\bf b}(\bar P)&=& 0\\
&=& \min \Big\{ \Is^{\bf b}(Q),\; Q \in \Prob_s(\O) \text { such that } 
\Is(Q)<+\infty 
\text { and } {\bf b} \in L^2(dt\otimes dQ)
\Big\} ,
%\text { and } {\bf b}  \in L^2(dt\otimes dQ) \Big\} ,
 \end{eqnarray*}
 or, with other words, $\bar P$ solves a variational principle.
 %in the sense of the statistical mechanics.
 
%%%%%%%%%%%%%%%%%%%%%%%%%%%%%%%%%%%%%%%%%%%%%%%%%%%%%%%%%%%%%%%%%%%%%%%%%%%
\subsection{$\bar P$ is a weak solution of the SDE \eqref{eq:1}} \label{sec:limit=sol}
%%%%%%%%%%%%%%%%%%%%%%%%%%%%%%%%%%%%%%%%%%%%%%%%%%%%%%%%%%%%%%%%%%%%%%%%%%%

We have to identify the initial marginal law of $\bar P$ and its drift.
%%%%%%%%%%%%%%%%%%%%%%%%%%%%%%%%%%%%%%
\subsubsection{The marginal law  of $\bar P$ at time 0} \label{loiini}
%%%%%%%%%%%%%%%%%%%%%%%%%%%%%%%%%%%%%%

Let $g$ be a bounded $\Gamma$-local function on $\R^\Zd$, satisfying 
$g(\omega)=g(\omega_\Gamma)$ for all $\omega\in\R^{\Z^d}$. 
By shift invariance of $\mu$, for all $n\ge 1$ and $i\in\Zd$ such that
$\theta_i^{-1}\Gamma \subset \L_n$,
$$
\mu_{\Ln}\circ\theta^{-1}_i(g)=\mu\circ\theta^{-1}_i(g)=\mu(g).
$$
\begin{eqnarray*}
\textrm{So }\qquad 
\bar P\circ X(0)^{-1} (g) & = &\lim_{n\to \infty} \bar P_n\circ X(0)^{-1} (g)  \\
& = & \lim_{n\to \infty} \frac{1}{|\L_n|} \sum_{i\in\Ln}  P_n^{\text{\rm per}}\circ\theta^{-1}_i\circ X(0)^{-1}(g) \\
&=&  \lim_{n\to \infty} \frac{1}{|\L_n|} \sum_{i\in\Ln, \theta_i^{-1}\Gamma \subset \L_n }  \mu_{\Ln}\circ\theta^{-1}_i(g) = \mu(g),
\end{eqnarray*}
which proves that $\bar P\circ X(0)^{-1}=\mu$.

%%%%%%%%%%%%%%%%%%%%%%%%%%%%%%%%%%%%%%
\subsubsection{Identification of the dynamics under $\bar P$}
%%%%%%%%%%%%%%%%%%%%%%%%%%%%%%%%%%%%%%

It only remains to identify the unknown drift $\beta$ of $\bar P$. 
Let us rewrite the free energy functional of $\bar P$ inserting $\beta$:
\begin{eqnarray*}
\Is^{\bf b} (\bar P) 
& = & \Is(\bar P)-  \Is(\bar P\circ X(0)^{-1})-E_{\bar P} \bigg( \int_0^1 {\bf b}_t( X)(dX_0(t)-\beta_t(X)dt)\\
& & \qquad \qquad + \int_0^1 \Big(\beta_t(X){\bf b}_t(X) -
\frac{1}{2}  {\bf b}^2_t(X)\Big)  dt \bigg)\\
& =& \Is(\bar P)-  \Is(\mu)-E_{\bar P} \left( \int_0^1 \Big ( \beta_t(X){\bf b}_t(X) -
\frac{1}{2}  {\bf b}^2_t(X)\Big) dt \right).
\end{eqnarray*}
By Proposition \ref{BarPminimizer}, this quantity vanishes.
On the other side, following essentially the proof of Lemma 8 in \cite{dprz} we have 
\begin{equation}\label{inega}
 \Is(\bar P\circ X(0)^{-1})+ \frac{1}{2} E_{\bar P}\Big(\int_0^1 \beta_t^2 dt\Big) \le \Is(\bar P) .
\end{equation}
 Therefore,
\begin{eqnarray*}
 0 & \ge &  E_{\bar P} \bigg(\frac{1}{2} \int_0^1  {\beta_t}^2(X)dt -\int_0^1 \Big(\beta_t(X){\bf b}_t(X) +
\frac{1}{2}  {\bf b}^2_t(X)\Big)dt \bigg)\\
& = &  \frac{1}{2}E_{\bar P} \Big( \int_0^1 \big({\beta_t}(X)-{\bf b}_t(X)\big)^2dt  \Big),
\end{eqnarray*}
which implies that $\beta_t(\o)={\bf b}_t(\o)$ for $dt \otimes \bar P$-almost all $t$ and $\o$.\\

It completes the proof that 
$\bar P$ is an infinite-dimensional Brownian diffusion with drift ${\bf b}$ and initial law $\mu$.

%% The originality of the model we present here comes from the
%% generality of the Hamiltonian functional $H$, which is neither a
%% quadratic or a polynomial one as in \cite{mrz} (formula (35) or
%% (74)), nor a bounded functional. It includes a stochastic integral
%% term, and therefore is highly explosive.

\section{On the Gibbs property} \label{sec:prop}

In this section, we deal with the Gibbsian structure of solutions of the SDE \eqref{eq:1}. 
First recall that the probability measure  $\Pi^{H,+}_\L(\xi,.)$ is not always well defined, 
as remarked in Section \ref{Sec:kernel}.
To circumvent this difficulty take $\Pi^{H,+}_\L(\xi,.)\equiv 0$ when the 
partition function $Z_\L(\xi)$ is not finite or 
when the stochastic
integral with respect to $\xi$ in $H_\L^+$ is not defined.
%The set of "good" configurations $\xi$ is denoted by $R_\L$. 
In this way the family of kernels
$(\Pi^{H,+}_\L)_{\L\Subset \Zd}$ builds a local specification as introduced 
%in the context of Statistical Mechanics 
by Preston in \cite{Pr} (2.10)-(2.14), which allows to define associated Gibbs measures.
\begin{Definition}
A probability measure $Q$ on $\Omega$ is a Gibbs measure with respect to the specification 
$(\Pi^{H,+}_\L)_{\L\subset \Zd}$ if, 
for all finite subset $\L$ of $\Zd$, 
\begin{equation}\label{DLR}
 Q(d\o)  = \int \Pi^{H,+}_\L(\xi,d\o)\, Q(d\xi).
\end{equation}
%$$  Q(.|\G_\L)=\Pi^{H,+}_\L a.s.$$
%where $\G_\L$ is the $\sigma$-field defined in Remark \ref{rem:kernelmeas}.
 \end{Definition}
Note the similarity with equation (\ref{DLRdyn}) where $Q$ appears here in place of $\bar P$.
% Let us note that the equations (\ref{DLR}), called DLR for Dobrushin-Landford-Ruelle, 
% are nothing else 
It follows that $\bar P$ is a Gibbs measure with respect to the
specification  $(\Pi^{H,+}_\L)_{\L\subset \Zd}$. Actually we obtain a more general result.

 \begin{Theorem}\label{Teq}
Let $Q$ be a probability measure in $\Prob_s (\Omega)$ with finite specific entropy 
and a marginal at time 0 belonging to $\Prob_{s,2} (\R^\Zd)$.
Then $Q$ is a Gibbs measure with respect to the
specification  $(\Pi^{H,+}_\L)_{\L\subset \Zd}$ if and only if 
$Q$ is a weak solution of the SDE \eqref{eq:1}.
 \end{Theorem}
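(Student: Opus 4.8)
The plan is to prove the two implications of Theorem \ref{Teq} separately, reusing machinery already assembled in Section \ref{PMT}. Throughout, $Q\in\Prob_s(\Omega)$ has finite specific entropy, so by Lemma \ref{edsrepresentation} it is a Brownian semimartingale with some adapted drift $\beta\in L^2(dt\otimes dQ)$, and by hypothesis $Q\circ X(0)^{-1}\in\Prob_{s,2}(\R^\Zd)$; moreover $Q$ is locally absolutely continuous with respect to $\P$, so all the kernels $\Pi^{H,+}_\L(\xi,\cdot)$ are $Q$-a.s.\ well defined by Lemma \ref{def}. The first task is to record that, exactly as in \eqref{bornesup} for $\bar P$, the finiteness of the second moment of the initial law together with the $\ell^2$-bound \eqref{estimee} (applied to the kernels $\Pi^H_\L$, which are solutions of \eqref{eq:FDSDE2}) yields $E_Q(\Vert X^*(1)\Vert_\gamma^2)<+\infty$ and hence ${\bf b}\in L^2(dt\otimes dQ)$, i.e.\ condition \eqref{squaredrift} holds. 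This is what makes the free energy $\Is^{\bf b}(Q)$ well defined and lets us invoke Proposition \ref{energielibre}.

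For the implication \emph{Gibbs $\Rightarrow$ weak solution}: assuming \eqref{DLR}, I would repeat verbatim the computation \eqref{calcul}--\eqref{uintprop}--\eqref{domination2} from the proof of Proposition \ref{BarPminimizer}, with $Q$ in place of $\bar P$ — nothing there used that $\bar P$ was an $\mathcal L$-limit, only the fixed-point identity \eqref{DLRdyn} (now \eqref{DLR}), stationarity, the sublinear bound, and the semimartingale decomposition of $Q$. This gives $\Is^{\bf b}(Q)\le 0$, and Proposition \ref{energielibre} gives $\Is^{\bf b}(Q)\ge 0$; hence $\Is^{\bf b}(Q)=0$. Then the argument of Section \ref{sec:limit=sol} applies unchanged: inserting the drift $\beta$ into the free energy functional and using the entropy inequality \eqref{inega} (which holds for any stationary finite-specific-entropy Brownian semimartingale, cf.\ Lemma 8 in \cite{dprz}) forces $E_Q(\int_0^1(\beta_t(X)-{\bf b}_t(X))^2dt)\le 0$, so $\beta={\bf b}$ $dt\otimes dQ$-a.e., i.e.\ $Q$ solves \eqref{eq:1}.

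For the converse \emph{weak solution $\Rightarrow$ Gibbs}: if $Q$ solves \eqref{eq:1}, then by Girsanov the local conditional law of $Q$ inside a finite box $\L$, given $\G_\L=\sigma(X_{\L^c},X(0))$, is precisely $\Pi^H_\L(\xi,\cdot)=P^{\xi,\L}(\cdot)$ — this is the path-space DLR identity for the finite-range dynamics and follows from the same standard conditional calculus (Lemmas 1--2 in \cite{dprz}) used in the proof of Lemma \ref{GibbsDyn}, now applied directly to $Q$ rather than to the approximations $P_n$. From $Q(\cdot\,|\G_\L)=\Pi^H_\L$ one passes to $Q(\cdot\,|\G_\L)=\Pi^{H,+}_\L$ by the measurability observations of Remark \ref{rem:kernelmeas}: $\xi\mapsto\Pi^H_\L(\xi,\cdot)$ is $\sigma(X_{\L^+\setminus\L},X_\L(0))\subset\G_\L$-measurable while $\xi\mapsto\Pi^{H,+}_\L(\xi,\cdot)$ is $\partial\F_\L=\G_\L\cap\F_{\L^{++}}$-measurable, and the enlargement by the $H_{\L^+}$-weight together with the normalisation $Z_\L$ is exactly the adjustment that expresses the $\G_\L$-conditional expectation as $\Pi^{H,+}_\L$; integrating against $Q$ then yields \eqref{DLR}.

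The main obstacle is the first implication, and more precisely two technical points inside it: first, justifying that the stochastic-integral terms appearing in $E_Q(\ln Z_\L)$ and in the cascade \eqref{calcul} are genuinely integrable under $Q$ (handled by the $\ell^2$-estimate \eqref{estimee} transported to $Q$ via its conditional kernels, giving the analogue of \eqref{uintprop} and \eqref{domination2}); and second, ensuring that the entropy inequality \eqref{inega} is available for $Q$ in the stated generality — this is where the finiteness of both $\Is(Q)$ and $\int x_0^2\,Q\circ X(0)^{-1}(dx)$ is used, so the hypothesis $Q\circ X(0)^{-1}\in\Prob_{s,2}(\R^\Zd)$ cannot be dropped. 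The converse direction, by contrast, is essentially a bookkeeping exercise with conditional expectations and the $\sigma$-field inclusions of Remark \ref{rem:kernelmeas}.
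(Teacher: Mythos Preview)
Your overall plan matches the paper's: the implication Gibbs $\Rightarrow$ weak solution is obtained by rerunning Sections \ref{sec:minenergy}--\ref{sec:limit=sol} with $Q$ in place of $\bar P$ (the paper observes explicitly that only \eqref{DLRdyn}, finite specific entropy, and $Q\circ X(0)^{-1}\in\Prob_{s,2}(\R^\Zd)$ were used there), and the converse is dispatched as a routine Girsanov/conditioning computation (the paper simply refers to Proposition~1 in \cite{dprz}). Two points need correction, however.

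First, your derivation of $E_Q(\Vert X^*(1)\Vert_\gamma^2)<+\infty$ ``via the kernels $\Pi^H_\L$'' does not work as stated: the Gibbs equations \eqref{DLR} express $Q$ as a mixture of $\Pi^{H,+}_\L$, not of $\Pi^H_\L$, so \eqref{estimee} cannot be plugged in directly, and \eqref{bornesup} for $\bar P$ came from the approximating sequence, not from the Gibbs property. The clean route uses only what you have already set up: by Lemma \ref{edsrepresentation}, $X_0(t)=X_0(0)+\int_0^t\beta_s\,ds+M_0(t)$ with $\beta\in L^2(dt\otimes dQ)$ and $M_0$ a $Q$-Brownian motion, so $E_Q(X_0^*(1)^2)\le 3\big(E_Q(X_0(0)^2)+E_Q\!\int_0^1\beta_s^2\,ds+E_Q(M_0^*(1)^2)\big)<+\infty$; shift-invariance and \eqref{eq:driftsslineaire} then give ${\bf b}\in L^2(dt\otimes dQ)$ and the analogue of \eqref{bornesup}.

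Second --- and this is the genuine gap --- in the direction weak solution $\Rightarrow$ Gibbs you assert $Q(\cdot\,|\G_\L)=\Pi^H_\L=P^{\xi,\L}$, which is \emph{false}. Conditioning on $X_{\L^c}$ does not leave the Brownian motions $(B_i)_{i\in\L}$ untouched: for $j\in\L^+\setminus\L$ the equation $dX_j={\bf b}_t(\theta_jX)\,dt+dB_j$ couples the observed path $\xi_j$ to $X_\L$ through the drift, so fixing $\xi_j$ biases $X_\L$. Concretely, when one disintegrates the joint Girsanov density, the terms of $H$ indexed by $i\in\L^+\setminus\L$ still depend on $X_\L$ after freezing $X_{\L^c}=\xi_{\L^c}$, and the conditional density against $\Pi^0_\L(\xi,\cdot)$ is therefore $e^{-H_{\L^+}}/Z_\L(\xi)$: the Girsanov computation yields $Q(\cdot\,|\G_\L)=\Pi^{H,+}_\L$ \emph{directly}. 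Your subsequent ``passage from $\Pi^H_\L$ to $\Pi^{H,+}_\L$'' is incoherent, since a conditional expectation is unique. Once the correct identity $Q(\cdot\,|\G_\L)=\Pi^{H,+}_\L$ is written down, integrating in $\xi$ gives \eqref{DLR} at once, which is all the paper (and \cite{dprz}) does.
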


\proof

``$\Leftarrow$'': it is similar to the proof of Theorem \ref{mainTH}.
Indeed, in Section \ref{PMT}, for proving that $\bar P$ is a
weak solution of the SDE \eqref{eq:1}, we only used the fact that $\bar P$ satisfies equation (\ref{DLRdyn}),
its specific entropy is finite and $\bar P \circ X(0)^{-1} \in \Prob_{s,2} (\R^\Zd)$ holds.

"$\Rightarrow$": it is straightforward. 
A similar detailed proof can be found  in \cite{dprz}, Proposition 1.
\endproof\\

To complete this section we present the\\
%\proof
{\bf Proof of Theorem \ref{representationTH}.} \\
Let us recall that a shift invariant probability measure is ergodic if it is trivial 
on the $\sigma$-field of shift invariant sets. \\
By  previous Theorem \ref{Teq}, the set of weak solutions $\S$ is exactly the set of shift invariant Gibbs
measures   for which the specific entropy and the second moment of the marginal at time 0 are finite. 
It is  known that the set of  stationary Gibbs measure admits a representation by mixing of its 
extremal points which
are ergodic (Theorem 2.2 and 4.1 in \cite{Pr}). 
Since the specific entropy functional is affine (\cite{Geo2011}, Proposition 15.14), this representation
remains valid inside  the set of Gibbs measures with finite specific entropy 
 and finite second moment. 
The first part of the theorem is proved.\\
Now let $\mu$ be an ergodic probability measure in $\Prob_{s,2} (\R^\Zd)$ 
with finite specific entropy. 
By Theorem \ref{mainTH}, there exists a weak solution $P$ of the SDE \eqref{eq:1} 
with initial condition $\mu$. Thanks to the above representation, $P$ is a mixing
of ergodic weak solutions of the s.d.e. \eqref{eq:1}  with respect to various 
initial laws. But all these initial conditions are necessarily equal to $\mu$, 
by ergodicity. 
It means that the mixing is trivial and $P$ is itself ergodic.
The second part of the theorem is then proved. \endproof\\

 {\bf Acknowledgement:} This work was supported in part by the Labex CEMPI 
 (ANR-11-LABX-0007-01). We thank two anonymous referees for their fruitful comments
 which allowed to really improve a first version of this paper.

\end{document}